\theoremstyle{plain}
\newtheorem{theorem}{Theorem}
\newtheorem{lemma}[theorem]{Lemma}
\newtheorem{proposition}[theorem]{Proposition}
\newtheorem{question}[theorem]{Question}
\theoremstyle{remark}
\newtheorem{remark}[theorem]{Remark}
\numberwithin{theorem}{section}
\numberwithin{equation}{section}
\newcommand{\Q}{{\mathbb Q}}
\newcommand{\F}{\mathbb{F}}
\newcommand{\Pj}{\mathbb{P}}
\newcommand{\D}{{\mathcal{D}}}
\newcommand{\Ls}{{\mathcal{L}}}
\newcommand{\Ss}{{\mathcal{S}}}
\newcommand{\Cs}{{\mathcal{C}}}
\DeclareMathOperator{\Tr}{Tr\,}
\title{On the linear bounds on genera of pointless hyperelliptic curves}
\author[Pogildiakov]{Ivan Pogildiakov}
\address{
Ivan Pogildiakov
\newline \indent
Laboratoire GAATI, Universit\'e de la Polyn\'esie fran\c caise
\newline \indent
BP 6570 --- 98702 Faa'a, Tahiti, Polyn\'esie fran\c caise
} 
\email{ivan.pogildiakov@gmail.com}
\begin{document}

\begin{abstract} 
An irreducible smooth projective curve over $\F_q$ is called \emph{pointless} if it has no $\F_q$-rational points. In this paper we study the lower existence bound on the genus of such a curve over a fixed finite field $\F_q$. Using some explicit constructions of hyperelliptic curves, we establish two new bounds that depend linearly on the number $q$. In the case of odd characteristic this improves upon a result of R.~Becker and D.~Glass. We also provide a similar new bound when $q$ is even.
\end{abstract}

\maketitle

\section{Introduction}

Given a prime power $q=p^n$ it is natural to ask whether a non-singular curve over the finite field $\F_{q}$ having no rational points exists. For example, if $p>3$, then the Fermat curve $X^{q-1}+Y^{q-1}+Z^{q-1}=0$ serves an example of such a curve, since $\alpha^{q-1} \in \{0,1\}$ for every $\alpha$ in $\F_q$.
A positive answer to this question in the case of arbitrary characteristic follows from a result due to N.~Anbar and H.~Stichtenoth \cite[Theorem $1.2$]{Anbar13}. They show that, given $q$ and a non-negative number $N$, there is a~suitable function field defining a smooth curve over $\F_q$ of large genus having exactly $N$ rational points. Thus, given $q$, it is natural to ask what genus a pointless curve over $\F_q$ may have.

Let $\Cs$ be an irreducible non-singular projective (we will omit these words further) genus $g$ curve defined over $\F_q$. Denote by $N_k(\Cs)$ the number of rational points of $\Cs\otimes_{\F_q}\F_{q^k}$. The curve $\Cs$ can not have too many or too few rational points. More precisely, the Hasse--Weil--Serre bound on $N_k(\Cs)$ holds$\colon$ 
\[
q^k+1-g\left\lfloor 2q^{k/2} \right\rfloor \leq N_k(\Cs) \leq q^k+1+g\left\lfloor 2q^{k/2} \right\rfloor.
\]
Therefore, if a curve $\Cs$ has no points over $\F_{q^k}$, i.e. $N_k(\Cs)=0$, then the lower bound implies a~restriction on the genus$\colon$
\begin{equation}\label{eq:general_bound}
g \geq (q^k+1)/\left\lfloor 2q^{k/2} \right\rfloor.
\end{equation}
Hence the following question arises.

\begin{question}\label{qn:zero}
Given a prime power $q$ and an integer $g$ satisfying $\eqref{eq:general_bound}$, does there exist a non-singular genus $g$ curve over $\F_q$ having no $\F_{q^k}$-points?
\end{question}

A complete answer to this question seems to be very difficult to obtain. It is directly related to the question about the attainability of the lower Hasse--Weil--Serre bound, becoming especially difficult, when the lower bound is non-positive but close to zero.

In this paper we study the following special version of Question $\ref{qn:zero}$.

\begin{question}\label{qn:lite_version}
Given a prime power $q$, what is the minimal number $g_q^{min}$ such that for any $g \geq g_q^{min}$ there is a non-singular pointless genus $g$ curve over $\F_q$?
\end{question}

We restrict our consideration of these questions to hyperelliptic curves. Note that for these curves there are several known improvements of the Hasse--Weil--Serre bounds. They are mostly based on Stepanov's method. One of these improvements belongs to H.~M.~Stark. Applying one of his estimates on the number of rational points on a hyperelliptic curve to the case $g=2$ and $q=13$, he showed that every genus two curve $\Cs$ over $\F_{13}$ has $N_1(\Cs)\geq 2$ \cite[p.~288]{Stark72} (note that if a genus $2$ curve over $\F_q$ has no rational points, then the inequality $\eqref{eq:general_bound}$ implies that $q \leq 13$). This fact probably initiated the study of Question $\ref{qn:zero}$.

There are several known approaches to Question $\ref{qn:zero}$. Each of them involves different methods and constructions. One can identify three main directions$\colon$ when $q$ is fixed, when $g$ is fixed, and when $q$ and $g$ both vary. 

The asymptotic behavior of the minimal degree of smooth plane curves over $\F_q$ having no rational points when $q\to\infty$ was studied by S.~Yekhanin in \cite{Yekhanin07}. For a field $\F_q$ of characteristic $p>3$ a sequence of pointless non-singular plane curves over $\F_{q^{4^{k+1}}}$ of genus $g_k$ such that $\log_{q^{4^{k+1}}}{g_k}$ tends to $2/3$ was constructed. Namely, he showed that the Fermat curves
\[
X^{d_k}+Y^{d_k}+Z^{d_k}=1,
\]
where $d_k=2^{k+1}(q^2-1)\prod_{i=1}^{k}{(q^{4^i}+1)}$, have no points over $\F_{q^{4^{k+1}}}$. 

Aside from the trivial case of genus one, the question is completely resolved only in the genus two, three and four cases. As we mentioned above, the genus two case was treated by H.~M.~Stark in \cite{Stark72}, where he modified Stepanov's method to estimate the number of rational points on hyperelliptic curves over $\F_p$. Later on, as a by-product of their research concerning abelian surfaces, D.~Maisner and E.~Nart listed in \cite[Table 4]{Maisner02} all the pointless genus two curves over $\F_q$, $q<13$, up to isomorphism over $\F_q$. The genus three and four cases were investigated in detail by E.~W.~Howe, K.~E.~Lauter and J.~Top in \cite{Howe05}. For $g$ equal to $3$ or $4$ and every $q$, they either provide an example of a pointless genus $g$ curve over $\F_q$, or prove by various methods that such a curve does not exist. Question $\ref{qn:zero}$ still remains open if the genus is greater than $4$.

The case when $q$ is fixed has been studied more extensively. It can be subdivided into at least two problems.

The first one is to estimate the minimal genus of a curve over $\F_q$ having no points of degree $n>0$ or less. It was proven by P.~Clark and N.~Elkies (a citation can be found in \cite[p.~127]{Howe05}), that for every prime $p$ there is a constant $C_p$ such that for every integer $n>0$ there is a curve over $\F_p$ of genus at most $C_p n p^n$ that has no points of degree $n$ or less. 

This unpublished result was reproven by C.~Stirpe in her PhD thesis \cite{Stirpe11} by means of the class field theory. She showed \cite[Corollary 4.3]{Stirpe11} that, given a degree $n>n_0$ place $m$ of the rational function field $\F_q(x)$, there is a ray class field extension of $\F_q(x)$ with conductor $m$ and constant field $\F_q$ of genus
\[
\frac{(n-2)(q^n-q)}{2(q-1)},
\]
having no places of degree smaller than $n$. Note, the constant $n_0$ can be specified by \cite[Remark 4.5]{Stirpe11} as $n_0=\lfloor 6 \log_{q}{(9/4)} \rfloor < 14$. Furthermore, by taking a place $m$ of a special form, she refined this result as follows.

\begin{theorem}[Theorem 1.1 in \cite{Stirpe11}]\label{thm:stirpe}
For any prime $p$ there is a constant $C_p$ such that for any $n>0$ and for any power $q$ of $p$ there is a projective curve over $\F_q$ of genus $g\leq C_p q^n$ without points of degree strictly smaller than $n$.
\end{theorem}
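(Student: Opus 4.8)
The plan is to realise the required curves as subextensions of Carlitz cyclotomic function fields over $\F_q(x)$ and to control everything by class field theory. The first step is a dictionary: an irreducible smooth projective curve $\Cs/\F_q$ has no closed point of degree $<n$ exactly when it has no $\F_{q^d}$-point for every $d<n$, and for an abelian extension $K/\F_q(x)$ of conductor $\mathfrak m$ with constant field $\F_q$ this means that every place $\mathfrak q$ of $\F_q(x)$ of degree $d<n$ outside the support of $\mathfrak m$ has Artin symbol of order $\ge\lceil n/d\rceil$ in $\mathrm{Gal}(K/\F_q(x))$ (forcing the places of $K$ above $\mathfrak q$ to have degree $\ge n$), and in addition the finitely many places dividing $\mathfrak m$, and the infinite place of $\F_q(x)$, also acquire residue degree $\ge n$; the genus of $K$ is then extracted from $2g(K)-2=-2[K:\F_q(x)]+\sum_{\psi}\deg\mathfrak f(\psi)$, the sum over the characters $\psi$ of $\mathrm{Gal}(K/\F_q(x))$.

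For $n$ in the range $n_0<n\le c\,q$ (with $c$ an absolute constant and $n_0<14$ as noted above) the statement follows at once from Corollary 4.3: a place $\mathfrak m=m$ of degree $n$ gives a geometric ray class field with no place of degree $<n$ and genus $\tfrac{(n-2)(q^n-q)}{2(q-1)}<n\,q^{\,n-1}\le c\,q^{\,n}$. The finitely many values $n\le n_0$ (in particular $n=2$, the pointless case) are handled separately and uniformly in $q$: for $n=2$ by the Fermat curve $X^{q-1}+Y^{q-1}+Z^{q-1}=0$ of genus $\tfrac{(q-2)(q-3)}{2}$ when $\mathrm{char}\,\F_q>3$, together with the known small-genus pointless curves in characteristics $2$ and $3$, and for $3\le n\le n_0$ by a cyclotomic cover with a hand-chosen conductor of degree $n$, the few residue-degree and genus checks being done directly.

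The heart of the matter is the remaining range, $n$ large relative to $q$, where the bound $\tfrac n2 q^{\,n-1}$ coming from Corollary 4.3 exceeds $C_p q^n$; here one must take $\mathfrak m$ of a \emph{special form}. The governing idea is that the absence of points of degree $<n$ already forces $[K:\F_q(x)]$ to be of size $\gtrsim q^{\,n-1}$ (for instance each of the $\sim q^{\,n-1}/(n-1)$ places of degree $n-1$ must have nontrivial Artin symbol), so there is essentially no slack: $\mathfrak m$ must be chosen so that $[K:\F_q(x)]$ stays of this order \emph{and} the different stays of order $O_p([K:\F_q(x)])$. One natural choice is a prime power $\mathfrak p^{r}$ at a place $\mathfrak p$ of small degree with $r$ comparable to $n$: then $\mathrm{Gal}(K/\F_q(x))$ is, after the bookkeeping at $\F_q^{*}$ and the infinite place, the wild $p$-group $1+\mathfrak p\mathcal O/\mathfrak p^{r}\mathcal O$, and because a principal unit $u$ with $v_{\mathfrak p}(u-1)=j$ has multiplicative order $p^{\lceil\log_p(r/j)\rceil}\ge r/j$ while $j\le d$ for a place $\mathfrak q$ of degree $d$, the Artin symbol of such a $\mathfrak q$ has order $\ge r/d$, so the places of $K$ above it have degree $\ge r\ge n$; the different at $\mathfrak p$, computed from the filtration by higher unit groups as $\sum_{c\ge2}c\,(q^{c-1}-q^{c-2})=O(r\,q^{\,r-1})$, then yields $g(K)=O_p(q^{\,r})=O_p(q^{\,n})$ once the tame and wild contributions at the infinite place are absorbed into the constant.

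The step I expect to be the main obstacle is this last one, and within it the simultaneous control of \emph{all} places of small degree: besides the unramified ones, the place(s) dividing $\mathfrak m$ and the infinite place of $\F_q(x)$ must also be shown to acquire residue degree $\ge n$ — which typically requires an extra twist, an extra tame factor in $\mathfrak m$, or a well-chosen distinguished place at infinity — and this must be arranged without destroying either the constraint $[K:\F_q(x)]=O(q^{\,n-1})$ or the genus bound, the wild ramification being precisely what makes $C_p$ depend on $p$. Once the modulus is chosen, the dictionary of the first paragraph and the conductor–discriminant computation are routine.
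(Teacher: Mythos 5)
You should first be aware that the paper contains no proof of this statement: it is Theorem~1.1 of Stirpe's thesis, imported verbatim as background, so your proposal has to stand on its own. Your dictionary between low-degree points and orders of Artin symbols is correct, and you rightly locate the entire difficulty in the range where $n$ is large compared with $q$ (for $n=O(q)$, Corollary~4.3 with a degree-$n$ conductor already gives genus $\frac{(n-2)(q^n-q)}{2(q-1)}\asymp\frac{n}{2}q^{n-1}=O(q^n)$). The problem is that your construction for that range does not deliver the bound. Take $\mathfrak{m}=\mathfrak{p}^{r}$ with $\deg\mathfrak{p}=1$ and $r\ge n$, as you propose, so that $[K:\F_q(x)]\asymp q^{r-1}$. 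Your own conductor--discriminant count gives $\deg\mathfrak{D}=\sum_{c=2}^{r}c\,(q^{c-1}-q^{c-2})\asymp r\,q^{r-1}$, and then $2g-2=-2[K:\F_q(x)]+\deg\mathfrak{D}$ yields $g\asymp\frac{r}{2}\,q^{r-1}\ge\frac{n}{2}\,q^{n-1}$. You record this as $g=O_p(q^{r})$, but $\frac{n}{2}q^{n-1}=\frac{n}{2q}\,q^{n}$ and $n/q\to\infty$ in precisely the regime you are treating; so the wildly ramified modulus reproduces the \emph{same} order of genus as Corollary~4.3 and the factor of $n$ has not been removed. The missing idea --- the actual content of passing from Corollary~4.3 to Theorem~1.1 --- is how to choose the modulus, or a subgroup of the ray class group, so that the average conductor degree per ramified character is $O_p(1)$ rather than $\asymp n$ while every place of degree $d<n$ still has Frobenius of order $\ge n/d$. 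Note that simply passing to a quotient of the principal unit group to shrink the discriminant destroys your lower bound $r/j$ on the order of the Artin symbol, since element orders can collapse in a quotient; so some new mechanism is required, and your sketch does not supply one.

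Two further points. The assertion that the $\sim q^{n-1}/(n-1)$ places of degree $n-1$ each having nontrivial Artin symbol forces $[K:\F_q(x)]\gtrsim q^{n-1}$ is not a valid implication (they could all map to a single nontrivial element); it is only a Chebotarev-flavoured heuristic, so it should not be leaned on to justify the size of the group. And the issue you flag at the end --- that the infinite place, the place dividing $\mathfrak{m}$, and the constant field must be controlled simultaneously --- is genuinely nontrivial, because the standard normalisation of a geometric ray class field makes a chosen rational place split completely, which is exactly what must be avoided here; leaving it as ``an extra twist'' is a second real gap, not a routine afterthought.
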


However, a sharper bound is proven in the general case. She obtained that, given a prime $p$, there is a large integer $n_1$ such that for any $n>n_1$ there is a function field over $\F_p$ of genus $g\leq \frac{1}{2(p-1)} p^n$ without places of degree smaller than $n$. This bound was replaced by $g\leq C_p q^n$, $C_p=\frac{1}{2(p-1)}p^{n_1}$, for arbitrary $n$. 

The second problem is to provide a number $g_q$ such that for any $g \geq g_q$ there is a pointless genus $g$ curve over $\F_q$. Using Artin--Schreier extensions of function fields, N.~Anbar and H.~Stichtenoth showed in \cite[Theorem 1.2]{Anbar13} that for any $q$ there are positive constants $a_q$, defined implicitly, and $b_q=(p-1)(q+2(p-1)\sqrt{q}+3p-1)$ such that for any $g\geq g_{q,N}=a_q N + b_q$ there is a genus $g$ curve over $\F_q$ having exactly $N$ rational points. In particular, for any $g \geq g_q = g_{q,0}$ there is a pointless genus $g$ curve over $\F_q$. In the case when $q$ is a square the constant can be slightly improved, namely $g_q = 2q(p-1)+3p^2-2$, which follows from \cite[Remark 4.4]{Anbar13}.

We also remark that a more general result is implied by \cite[Theorem 1.4]{Anbar13}. Given a prime power $q$ and a positive integer $n$, there is a constant $g_0$ such that for every $g \geq g_0$ there is a curve over $\F_q$ having no points of degree $n$ or less. This differs from the results of C.~Stirpe, since the bound on the genus is worse, however, the existence is valid for any $g \geq g_0$. 

Finally, in the case of odd characteristic there are results due to R.~Becker and D.~Glass \cite{Becker13} that inspired our own research in this direction. In their work an explicit construction of pointless hyperelliptic curves of a certain form defined over a finite field $\F_q$ was provided. Let us discuss some of their results.

Let $q$ be a power of an odd prime $p$. There are two types of existence bounds on the genus of a pointless curve over $\F_q$. The first one is implied by \cite[Theorem 1.2]{Becker13}, which can be reformulated in terms of pointless curves in the following manner. 

\begin{theorem}[\protect{Reformulation of \cite[Theorem 1.2]{Becker13}}]\label{thm:BG_result1}
Let $a$ be the least residue of $g\mod{p}$. Suppose that $g \geq (p-a-1)(q-1)$, if $a<p-1$, or $g \geq (p-2a-2)(q-1)/2$, if $0 \geq a \geq (p-3)/2$. Then there exists a non-singular hyperelliptic pointless curve of genus $g$ defined over $\F_q$.
\end{theorem}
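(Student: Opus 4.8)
Since Theorem~\ref{thm:BG_result1} is merely a reformulation of \cite[Theorem 1.2]{Becker13}, the plan is to recall the explicit construction there and translate its conclusion into the language of the least residue $a$ of $g$ modulo $p$. I would first recall (a form of) that construction: over $\F_q$ with $q$ odd one considers hyperelliptic curves $C\colon y^2=f(x)$ whose defining polynomial is a fixed quadratic non-residue $t$ times a product of $k$ Artin--Schreier-type factors $x^{q-1}-c_i$, adjusted by auxiliary squarefree factors chosen to be (nonzero) squares at every point of $\F_q$. The factors $x^{q-1}-c_i$ are useful because $\alpha^{q-1}\in\{0,1\}$ for each $\alpha\in\F_q$, so such a factor takes only the two values $-c_i$, $1-c_i$ on the rational points of the $x$-line. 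If the $c_i$ are chosen so that $\prod_i(-c_i)$ and $\prod_i(1-c_i)$ lie in the appropriate square-class (making every value $f(\alpha)$, $\alpha\in\F_q$, a non-residue), then $C$ has no affine $\F_q$-point; and since $\deg f$ is even with non-residue leading coefficient $t$, the two points at infinity are not rational either, so $C$ is pointless.

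Next comes the genus bookkeeping. The curve has genus $g=\tfrac12\deg f-1$; writing $\deg f=k(q-1)+(\text{degree of the auxiliary part})$ and reducing modulo $p$ — using that $q\equiv0$, $q-1\equiv-1$ and $(q-1)/2\equiv(p-1)/2$ modulo $p$ — one sees that the residue class of $g$ is then controlled by the number $k$ of Artin--Schreier factors, so that realising a genus with least residue $a$ leads one to take $k\equiv-2(a+1)\pmod p$. The least admissible value is $k=2(p-a-1)$ whenever $a<p-1$, which yields the base genus $\approx(p-a-1)(q-1)$; for small $a$, i.e.\ $a\le(p-3)/2$, there is the strictly smaller choice $k=p-2a-2$, which yields the base genus $\approx(p-2a-2)(q-1)/2$. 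Inserting these, and using the auxiliary factors (together with blocks of $p$ further $x^{q-1}-c_i$'s) to climb through every larger genus in the fixed residue class, reproduces exactly the two stated lower bounds.

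The step I expect to be the genuine obstacle — and the heart of \cite[Theorem 1.2]{Becker13} — is showing that over the given field one can in fact choose all these data so that the relevant products are non-residues (resp.\ the auxiliary factors square-valued) while $f$ stays squarefree with pairwise coprime factors. This reduces to a counting problem: one estimates by a quadratic-character sum the number of admissible tuples $(c_1,\dots,c_k)$ and of admissible auxiliary polynomials, and shows it is positive as soon as $q$ is not too small relative to $p$, so that the effective constraint on the genus is the combinatorics of the construction rather than the Hasse--Weil--Serre bound $\eqref{eq:general_bound}$. Granting this, the remaining points — squarefreeness and coprimality, the degree-filling argument inside each residue class, and the case split at $a=(p-3)/2$ — are routine.
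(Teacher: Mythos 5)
First, note that the paper itself gives no proof of this statement: it is quoted verbatim (as a reformulation) from Becker--Glass, so a blind proof has to actually reprove their Theorem~1.2. Your proposal does not do that. The step you yourself identify as ``the heart'' --- the existence of admissible data making every value of $f$ a non-residue while $f$ stays squarefree --- is simply granted, and the route you sketch for it (a quadratic-character-sum count that is positive ``as soon as $q$ is not too small relative to $p$'') would at best prove a weaker statement: the theorem carries no such hypothesis, and the genus bounds $(p-a-1)(q-1)$, $(p-2a-2)(q-1)/2$ are meaningful precisely in the regime of small $q$ and large $p$-dependent constants. In Becker--Glass (and in the related constructions of this paper) no counting is needed: squarefreeness is checked via explicit discriminant/gcd computations and the value conditions by explicit choices, which is what makes the result unconditional in $q$.

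Second, and more seriously, your genus-filling mechanism is structurally broken. You propose to climb through all genera in a residue class by multiplying in small auxiliary squarefree factors that are nonzero squares at every point of $\F_q$. No such factor of small degree exists: if $h\in\F_q[x]$ is squarefree of degree $d$ with $\chi_q(h(\alpha))=1$ for all $\alpha\in\F_q$, then the smooth curve $y^2=h(x)$ has at least $2q$ affine rational points, and the Hasse--Weil bound forces $2q\le q+1+2+(d-1)\sqrt{q}$, i.e. $d\gtrsim\sqrt{q}$. So the only fine degree adjustments compatible with prescribed square classes at rational points are \emph{additive}, not multiplicative: one multiplies the part vanishing on $\F_q$ (a power of $x^{q-1}-1$) by a power of $x$ and adds a constant, exactly as in the family $\eqref{eq:curves_type_1}$, $f_{g,l,a}(x)=x^{2g+2}-x^{2g+2-l(q-1)}+a^2$, which the paper notes contains most of the Becker--Glass curves; there the degree (hence the genus) can be tuned freely while the values on $\F_q$ stay constant, and the whole difficulty shifts to squarefreeness of the trinomial (handled by Lemma~\ref{lmm:discriminant}-type discriminant conditions, which is where the congruence conditions on $g$ modulo $p$ and the two bounds of the theorem actually come from). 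Your residue-class bookkeeping ($k\equiv-2(a+1)\bmod p$, the split at $a\le(p-3)/2$) is consistent with the final bounds, but without a working filling device and with the existence step assumed, the proposal does not constitute a proof.
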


These bounds depend on the congruence class of the genus modulo $p$ and, in general, are non-linear. Furthermore, they showed \cite[Lemma 2.2]{Becker13} that if $a=p-1$ and $g \geq (q-1)/2$, then such a curve also exists. Putting the above together, one obtains the bound $g_q^{min} \leq (p-1)(q-1)$, which holds without any restriction on the genus.

R.~Becker and D.~Glass also established several results of another type, providing linear existence bounds. These bounds require imposing certain restrictive conditions on the genus. For example, they proved$\colon$

\begin{theorem}[\protect{\cite[Corollary 2.5]{Becker13}}]\label{thm:BG_result2}
Let $q$ be a power of a prime $p>2$. For a given $g\geq\frac{q-1}{2}$, set $d=\gcd(2g+2,q-1)$. If $(2g+2)^d \not\equiv (2g+3)^d \mod{p}$, then there exists a hyperelliptic curve over $\F_q$ of genus $g$ with no rational points.
\end{theorem}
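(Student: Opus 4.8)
The plan is to exhibit an explicit hyperelliptic curve $C\colon y^{2}=f(x)$ of genus $g$ over $\F_q$ with $C(\F_q)=\emptyset$. The first step is the standard dictionary: for $q$ odd, a genus $g$ hyperelliptic curve can be written $y^{2}=f(x)$ with $f$ squarefree, its affine $\F_q$-points are the pairs $(a,b)$ with $b^{2}=f(a)$, and the points above $x=\infty$ are $\F_q$-rational precisely when $\deg f=2g+1$, or $\deg f=2g+2$ with the leading coefficient of $f$ a square. Hence $C$ is pointless if and only if $\deg f=2g+2$, the leading coefficient of $f$ is a non-square, and $f(a)$ is a nonzero non-square for every $a\in\F_q$. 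So everything reduces to producing such an $f$ of degree $2g+2$.

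The model to keep in mind is a binomial $f(x)=\lambda(x^{2g+2}-\alpha)$ with $\lambda$ a fixed non-square; more flexibly one multiplies it by ``padding'' factors of the form $x^{q-1}-\beta_{i}$, chosen so that $1-\beta_{i}$ and $-\beta_{i}$ are nonzero squares, so that each padding factor takes a nonzero square value at every point of $\F_q$ and therefore does not disturb the quadratic character of $f$, while letting one tune the degree up to $2g+2$ (the hypothesis $g\ge(q-1)/2$ gives $2g+2>q-1$, so there is room to do this). The reason $d=\gcd(2g+2,q-1)$ governs the statement is that the image of $a\mapsto a^{2g+2}$ on $\F_q$ is $\{0\}\cup H$, where $H=(\F_q^{*})^{d}$ is the subgroup of $d$-th powers, of order $(q-1)/d$. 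With $\lambda$ a non-square, the pointlessness requirement collapses to an arithmetic statement about $\alpha$: one needs $\alpha\notin H$ (which also ensures $f$ has no $\F_q$-rational root, and, together with the exponent being prime to $p$, that $f$ is squarefree), and one needs $h-\alpha$ to be a nonzero square for every $h\in H\cup\{0\}$.

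The heart of the matter --- and where I expect the real work to be --- is producing $\alpha$ subject to these constraints. A generic $\alpha$ will not do: ``$h-\alpha$ is a square'' is a separate constraint for each of the $|H|=(q-1)/d$ elements $h\in H$, and these behave like independent events of probability $\tfrac12$, so for small $d$ one must use a genuinely special $\alpha$ (and, when $d$ is very small, a different polynomial altogether). I expect the congruence $(2g+2)^{d}\not\equiv(2g+3)^{d}\pmod p$ to be exactly the non-degeneracy hypothesis that makes an explicit choice work: the natural candidate for $\alpha$ is built from the residues $2g+2$ and $2g+3$ (and a $d$-th root tied to their ratio), and the congruence is what prevents this $\alpha$ from accidentally landing in $H$ or from forcing some translate $h-\alpha$ to be a non-square. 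The alternative route is softer --- bound the number of admissible $\alpha$ from below via the Weil estimate applied to the character sum $\sum_{h\in H}\chi(h-\alpha)$, i.e. count $\alpha$ with $\prod_{h\in H\cup\{0\}}\bigl(1+\chi(h-\alpha)\bigr)\neq 0$ --- but here too one must check that the stated congruence rules out the degenerate cases in which the main term is cancelled. Once $\alpha$ (and the $\beta_{i}$) are in hand, the remaining checks --- $\deg f=2g+2$ exactly, $f$ squarefree (treating $p\mid 2g+2$ by replacing a non-separable binomial factor by its radical and rebalancing the padding, which does not change $d$), and non-square leading coefficient --- are routine.
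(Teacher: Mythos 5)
There is a genuine gap: the construction at the heart of your argument is never carried out. You reduce everything to finding $\alpha$ such that $h-\alpha$ is a nonzero square for every $h$ in $H\cup\{0\}$, where $H$ is the group of $d$-th powers, and then only say you ``expect'' the congruence $(2g+2)^d\not\equiv(2g+3)^d\bmod p$ to make a suitable explicit $\alpha$ work. No candidate $\alpha$ is produced and no existence argument is given. Worse, the reduction itself is a dead end in exactly the cases the theorem must cover: when $d$ is small, $|H\cup\{0\}|=(q-1)/d+1$ is of size comparable to $q$, so you are imposing on the order of $q/d$ independent quadratic-residue conditions on a single $\alpha$; for $d=2$ (which is allowed by the hypothesis) you would need $(q+1)/2$ translates of $\alpha$ to be simultaneously nonzero squares, and neither an explicit choice nor the Weil-type counting you sketch can deliver this once $(q-1)/d$ exceeds roughly $\log_2 q$. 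You even note that small $d$ would require ``a different polynomial altogether,'' but that different polynomial is precisely the missing content. You have also misidentified the role of the hypothesis: it is not a residuosity condition on translates of $\alpha$, it is a smoothness (discriminant) condition for a trinomial.

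The actual route (as in the paper's Section \ref{sec:preliminaries}--\ref{sec:the_proof_odd}, following Becker--Glass) sidesteps the value problem entirely. Take $F(x)=f_{g,1,a}(x)=x^{2g+2}-x^{2g+2-(q-1)}+a^2$ as in \eqref{eq:curves_type_1} with $a\in\F_q^*$. Since $g\geq(q-1)/2$, the middle exponent is at least $2$, and $x^{q-1}=1$ for $x\neq 0$, so $F(x)=a^2$ for every $x\in\F_q$; hence $\chi_q(F(x))=1$ identically and, since $F$ is monic of degree $2g+2$, the curve $y^2=F(x)$ is $\F_q$-maximal as soon as $F$ is squarefree. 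Squarefreeness is where the hypothesis enters: by the trinomial discriminant criterion (Lemma \ref{lmm:discriminant}), $F$ has a repeated root only if $s_{g,1,a}=0$ in \eqref{eq:s_definition}, and raising that relation to the $d$-th power (as in Lemma \ref{lmm:system}, using $a^{2(q-1)}=1$ and $(2g+3)^{q-1}=1$) forces $(2g+2)^d\equiv(2g+3)^d\bmod p$, contradicting the assumption. Finally the quadratic twist of an $\F_q$-maximal hyperelliptic curve is pointless (Lemma \ref{lm:maximal_pointless}). Your twist-free normalization (demanding $f$ take non-square values with non-square leading coefficient) is equivalent in principle, but without the constant-on-$\F_q$ trinomial trick the pointwise conditions become an intractable system, which is the step your proposal leaves unproved.
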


Moreover, in the case of prime $q$ this theorem can be simplified as follows. Suppose $g$ is an integer such that $g\geq(p-3)/2$ and $g+1$ is prime to $(p-1)/2$. It results from \cite[Theorem 1.3]{Becker13} that there is a non-singular pointless genus $g$ curve over $\F_p$.

However, no linear bound without any restriction on $g$ was previously known, unless $q$ is a Sophie Germain prime. 

In this paper we provide a linear existence bound on the genus of a non-singular pointless curve defined over a fixed finite field $\F_q$ with no additional assumptions. Our main result is the following theorem$\colon$

\begin{theorem}\label{thm:linear_bound}
Let $q$ be a prime power. Set
\[
g_q = 
\begin{cases}
\max{\left\{ \frac{q-3}{2},2 \right\}}, & \text{$q$ is odd}, \\
\max{\left\{ q-1,2 \right\}}, & \text{$q$ is even}.
\end{cases}
\]
Suppose that $g \geq g_q$. Then there is a smooth genus $g$ hyperelliptic curve over $\F_q$ having no $\F_q$-points. 
\end{theorem}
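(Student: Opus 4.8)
The plan is to construct, for each genus $g \geq g_q$, an explicit hyperelliptic curve $y^2 = f(x)$ (in odd characteristic) or $y^2 + h(x)y = f(x)$ (in characteristic two) whose defining polynomial is chosen so that every affine $\F_q$-point is excluded by a quadratic-residue (resp. Artin--Schreier) obstruction, and so that the points at infinity are also excluded by a parity/leading-coefficient condition. Concretely, in odd characteristic I would look for $f$ of the form $f(x) = c \cdot \prod_{i}(x - a_i)^2 \cdot r(x)$ where $r(x)$ is a fixed low-degree polynomial that is a non-square at every element of $\F_q$ (for instance a suitable irreducible quadratic or a constant times an irreducible quadratic), and $c$ a non-square; multiplying by the square factor $\prod (x-a_i)^2$ lets me inflate the degree—hence the genus—by $1$ at a time (each extra repeated linear factor adds $1$ to $\deg f$) while preserving the non-residue property of $f(\alpha)$ for all $\alpha \in \F_q$ where it is nonzero, and the repeated roots do not introduce smooth affine points with $y = 0$ that lie on the curve with $y\neq 0$. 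The remaining issue is the behaviour at the roots $a_i$ themselves (where $f$ vanishes, giving genuine points $(a_i, 0)$) — so in fact the square factors must be taken at points \emph{outside} $\F_q$, i.e.\ as norms of elements of $\F_{q^2}$ or higher, or one instead adjusts the degree by attaching an irreducible factor of the appropriate degree that is globally a non-residue. One then checks: (i) $f(\alpha)$ is a nonzero non-square for every $\alpha \in \F_q$, so there are no affine rational points; (ii) the genus is $\lfloor (\deg f - 1)/2 \rfloor$, and the construction realizes every sufficiently large value; (iii) the points at infinity are not rational, which in the hyperelliptic model amounts to requiring $\deg f$ even with non-square leading coefficient, or $\deg f$ odd (one rational point at infinity — bad) — so we force $\deg f$ even and the leading coefficient a non-square.

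Assembling this: with $\deg f = 2g+2$ even, genus $g$, leading coefficient $c$ a non-square, and $f(\alpha)$ a non-square in $\F_q^\times$ for all $\alpha \in \F_q$, the curve $y^2 = f(x)$ is a smooth pointless hyperelliptic curve of genus $g$. To hit \emph{every} $g \geq (q-3)/2$, note $2g+2 \geq q-1$, so we need polynomials of every even degree $\geq q-1$ that are non-residues everywhere on $\F_q$; a natural template is $f(x) = c\,(x^q - x - \beta)\cdot s(x)$ type expressions, or products of translates of a fixed irreducible $\Phi(x)$ of degree $2$ with the property that $\Phi$ is everywhere a non-square, together with an even number of extra factors of the form $(x^2 - \delta)$ with $\delta$ a non-square (each such factor is a non-square at every $\alpha\in\F_q$ and bumps the degree by $2$, preserving evenness), plus possibly one factor of degree an odd number coprime considerations to fix the residue of the degree modulo the small gap. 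The count of available building blocks is what forces the threshold $(q-3)/2$: below it there are simply not enough degrees of freedom, which is consistent with the Hasse--Weil--Serre obstruction $\eqref{eq:general_bound}$.

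In characteristic two the model is $y^2 + h(x) y = f(x)$, and the relevant obstruction at $\alpha \in \F_q$ is that the Artin--Schreier equation $t^2 + h(\alpha) t = f(\alpha)$ has no solution in $\F_q$, i.e.\ $\Tr_{\F_q/\F_2}\!\bigl(f(\alpha)/h(\alpha)^2\bigr) = 1$ whenever $h(\alpha)\neq 0$, together with $h(\alpha)\neq 0$ for all $\alpha$ (otherwise one risks a rational or singular point) — and a matching condition at infinity governed by $\deg h$, $\deg f$ and leading coefficients. I would take $h(x)$ to be a fixed polynomial with no roots in $\F_q$ (e.g.\ a power of an irreducible, or $h(x) = x^q + x + 1$), and then choose $f$ so that $f/h^2$ has trace $1$ at every $\F_q$-point; the genus of such a curve is controlled by $\max\{2\deg h,\deg f\}$ roughly, and degree increments of $1$ are achievable, giving every $g \geq q - 1$.

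The main obstacle I anticipate is \textbf{realizing \emph{every} genus above the threshold rather than an arithmetic progression of genera}: the naive constructions change $\deg f$ by $2$ at a time (to preserve evenness and the everywhere-non-residue property), so hitting both parities of $g$ requires a genuinely different gadget for the ``off-parity'' case — presumably admitting one point at infinity defined over $\F_{q^2}\setminus \F_q$ by taking $\deg f$ odd but with a further twist, or pairing two such factors. Equivalently the difficulty is: describe, for \emph{each} $m \geq q-1$, an explicit degree-$m$ polynomial over $\F_q$ that is a non-square at every point of $\F_q$ and has non-square leading coefficient (char.\ odd), or its trace-$1$ analogue (char.\ two); most of the work of the proof will be this explicit polynomial bookkeeping plus the routine smoothness and genus computations, while the conceptual content is the choice of the fixed ``seed'' non-residue polynomial $r(x)$ resp.\ the pair $(h,f)$.
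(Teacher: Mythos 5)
Your overall framework (exclude affine points by a quadratic-residue or Artin--Schreier obstruction, exclude the points at infinity via the degree and leading coefficient, then realize every large genus by degree bookkeeping) is the right one, but the concrete devices you propose break down. First, multiplying $f$ by $\prod_i(x-a_i)^2$ does not raise the genus: the curve $y^2=g(x)^2h(x)$ is birational to $y^2=h(x)$, so the genus of the smooth model is governed by the squarefree part of $f$ (and the naive plane model is singular at the repeated roots, whether or not the $a_i$ lie in $\F_q$). So ``inflating the degree one at a time by square factors'' cannot produce genus $g$. Second, no fixed low-degree seed $r(x)$ can be a non-square at every point of $\F_q$ once $q$ is large: for squarefree $r$ the Weil bound gives $\left|\sum_{\alpha}\chi_q(r(\alpha))\right|\leq(\deg r-1)\sqrt{q}$, so an everywhere-non-residue polynomial must have degree at least about $\sqrt{q}+1$. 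In particular your building blocks $x^2-\delta$ with $\delta$ a non-square are \emph{not} non-squares at every $\alpha$ (for $q=5$, $\delta=2$, $\alpha=1$ one gets $-1$, a square), and in any case the product of two everywhere-non-residue factors is an everywhere-residue, so the parity bookkeeping of such blocks fails as well. These are not routine details to be filled in later; they are the points where the construction has to be genuinely different.

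What the paper does instead, and what your sketch is missing, is the following. It works with $\F_q$-\emph{maximal} hyperelliptic curves, i.e.\ with $F$ satisfying $\chi_q(F(\alpha))=1$ for all $\alpha$, and passes to a pointless curve of the same genus by a quadratic twist (Lemma $\ref{lm:maximal_pointless}$); the everywhere-square condition is then obtained for free by choosing $F$ congruent to a fixed square modulo $x^q-x$, concretely trinomials $x^{2g+2}-x^{2g+2-l(q-1)}+a^2$ built from powers of $x^{q-1}$, which is $0$ or $1$ on $\F_q$. This is the device that lets $\deg F=2g+2$ take every value above the threshold. The real work is then proving squarefreeness, done via an explicit discriminant formula for trinomials and a case analysis on $\gcd(2g+2,q-1)$ and $\lfloor(2g+2)/(q-1)\rfloor$, with two further ad hoc families (Theorem $\ref{thm:prime_field}$, Propositions $\ref{prop:new_curve}$ and $\ref{prop:last_genus}$) for the residual cases, including the boundary genus $(q-3)/2$. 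Your even-characteristic paragraph is closer in spirit to the paper's Proposition $\ref{prop:phc_char2}$ (a polynomial $Q$ with no roots in $\F_q$ plus two trace-one conditions), but as written neither part of the proposal constitutes a proof.
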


This result is stronger than that of R.~Becker and D.~Glass \cite{Becker13}, since it provides unconditional bounds, which depend linearly on $q$. Furthermore, it covers the case of even characteristic. 

We finish this introduction with a brief discussion of Question $\ref{qn:lite_version}$. It can be completely answered for small values of $q$. We know that a pointless genus $2$ curve over $\F_q$ exists if and only if $q<13$. It is also known that for $q \leq 13$ there are pointless curves over $\F_q$ of genus $3$ and of genus $4$. As a corollary of Theorem $\ref{thm:linear_bound}$, for an arbitrary $q$ we have the bound 
\[
g_q^{min} \leq g_q = 
\begin{cases}
2,   & q=2, 3, 5, 7,\\
q-1, & \text{$q$ is even}, q \geq 4,\\
(q-3)/2, & \text{$q$ is odd}, q \geq 9.
\end{cases}
\] 
Thus, collecting all this together, we get $g_q^{min} = 2$ when $q < 13$ and $q \not= 8$, and $g_{13}^{min} = 3$. 

However, if $q=8$ or $q>13$, then we can obtain only the upper and the lower bounds on $g_q^{min}$. For example, there is a curve over $\F_2$ of genus $4$ having no points of degree $3$ or less (see the table in the end of \cite[Chapter $5$]{Stirpe11}). Therefore, we have $4 \leq g_8^{min} \leq 7$ and we do not know, whether a pointless curve over $\F_8$ of genus $5$ or $6$ exists. In general, one can estimate the quantity $g_{q}^{min}$ by means of the inequality $\eqref{eq:general_bound}$ and the bound above.

In Section $\ref{sec:preliminaries}$ we give some necessary preliminaries. In section $\ref{sec:the_proof_odd}$ we prove our result in the case of odd characterictic in several steps by explicit constructions of suitable smooth hyperelliptic curves. In this case the idea of finding such curves is similar to the one used in \cite{Becker13}. However, we prove their smoothness by an alternative method, which leads to an improvement of the results in \cite{Becker13}. In section $\ref{sec:the_proof_even}$ we finish the proof of Theorem $\ref{thm:linear_bound}$ providing an explicit construction of pointless curves over a finite field of even characteristic.

\textbf{Acknowledgments.} I would like to thank Prof. Alexey Zykin, my supervisor, for the support motivating me in this research, for careful reading of the earlier drafts of the paper and many helpful comments.

\section{Preliminaries}
\label{sec:preliminaries}

In this section we introduce the notation and the definitions, that will be used in the rest of the paper. All necessary general information on hyperelliptic curves over finite fields can be found in \cite{Liu02} (we will use Proposition $4.24$ and Remark $4.25$).

Let $p$ be a prime and let $q$ be a power of $p$. A genus $g$ hyperelliptic curve $\Cs$ over the finite field $\F_q$ can be defined by
\[
y^2 + Q(x) y = P(x),
\]
where $Q(x)$ and $P(x)$ are polynomials in $\F_q[x]$ satisfying
\[
2g+1 \leq \max\{ 2 \deg{Q(x)}, \deg{P(x)} \} \leq 2g+2.
\]
If $p$ is odd, then one can take $Q(x)=0$. In this case $\Cs$ is a smooth projective curve if and only if $P(x)$ has no repeated roots in $\overline{\F_q}$. In the case of even characteristic the smoothness of $\Cs$ is determined by a condition that $Q^{\prime}(x)^2 P(x) + P^{\prime}(x)^2$ and $Q(x)$ are coprime.

The curve $\Cs$ is the union of two affine curves 
\[
y^2 + Q(x) y = P(x) \text{ and } y^2 + x^{g+1} Q(1/x) y = x^{2g+2} P(1/x).
\]
The set of its rational points is the disjoint union of the set of rational points of the first affine curve and the set of rational points of the second one having $x=0$.

A hyperelliptic curve over $\F_q$ is said to be \emph{$\F_{q^k}$-maximal}, if it has precisely $2q^k+2$ points over $\F_{q^k}$. This definition is reasonable, since a hyperelliptic curve is a degree $2$ branched covering of the projective line, so that it has at most $2$ different points over each point of $\Pj^1$. 

We start with the following auxiliary lemma allowing us, in the case of odd characteristic, to search for $\F_q$-maximal non-singular hyperelliptic curves over $\F_q$ rather than pointless ones.

\begin{lemma}\label{lm:maximal_pointless}
A smooth $\F_q$-maximal genus $g$ hyperelliptic curve over $\F_q$ exists if and only if a smooth pointless hyperelliptic genus $g$ curve over $\F_q$ does.
\end{lemma}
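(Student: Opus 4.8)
The plan is to exploit the hyperelliptic involution and the structure of the double cover $\Cs \to \Pj^1$. Write $\Cs$ as $y^2 = P(x)$ with $P$ squarefree of degree $2g+1$ or $2g+2$ (we are in odd characteristic, so $Q=0$). Over each $x_0 \in \Pj^1(\F_q)$ the fibre consists of either two rational points, one rational (ramification) point, or no rational point, according to whether $P(x_0)$ is a nonzero square, zero, or a nonsquare in $\F_q$ (with the point at infinity handled via the second affine chart, governed by the leading coefficient and parity of $\deg P$). Thus $\Cs$ is pointless exactly when $P(x_0)$ is a nonzero nonsquare for every $x_0 \in \F_q$ and the fibre at infinity is also empty. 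The key observation is that replacing $P$ by $c\cdot P$ for a nonsquare $c \in \F_q^\times$ turns every square value into a nonsquare value and vice versa, and fixes the zeros; and this twist sends an $\F_q$-maximal curve (every fibre over $\Pj^1(\F_q)$ having two rational points, hence $2q+2$ points total) to a pointless one, and conversely.

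First I would make precise the correspondence between the fibre behaviour and the point count, citing Proposition $4.24$ and Remark $4.25$ of \cite{Liu02}: a smooth model of $y^2=P(x)$ of genus $g$ has exactly $2q+2$ rational points if and only if for every $x_0\in\F_q$ the value $P(x_0)$ is a nonzero square in $\F_q$ \emph{and} the two points above $x=\infty$ are rational; symmetrically it is pointless if and only if $P(x_0)$ is a nonzero nonsquare for every $x_0\in\F_q$ and there is no rational point above $x=\infty$. Here one must keep careful track of the chart at infinity: if $\deg P = 2g+2$ with leading coefficient $\ell$, the fibre at infinity is $\{y^2=\ell\}$, which has two rational points iff $\ell$ is a square and none iff $\ell$ is a nonsquare; if $\deg P = 2g+1$, the point at infinity is a single rational ramification point, so in that case neither a maximal nor a pointless curve can have odd-degree $P$ — both forms must have $\deg P = 2g+2$. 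I would note this so the twist argument stays within the same degree.

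Next, given a smooth $\F_q$-maximal curve $\Cs\colon y^2=P(x)$, fix a nonsquare $c\in\F_q^\times$ and set $\widetilde P(x)=c\,P(x)$. Then $\widetilde P$ is again squarefree of degree $2g+2$, so $\widetilde\Cs\colon y^2=\widetilde P(x)$ is again smooth of genus $g$. For each $x_0\in\F_q$, $\widetilde P(x_0)=c\,P(x_0)$ is a nonsquare times a nonzero square, hence a nonzero nonsquare; and the leading coefficient becomes $c\ell$, a nonsquare, so the fibre at infinity becomes empty. By the criterion above, $\widetilde\Cs$ is pointless of genus $g$. The reverse implication is identical: twisting a pointless curve by the same $c$ produces an $\F_q$-maximal one (every $\widetilde P(x_0)=cP(x_0)$ is a nonsquare times a nonsquare, i.e.\ a nonzero square, and $c\ell$ is a square). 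This establishes the claimed equivalence.

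I do not expect a serious obstacle here; the argument is essentially bookkeeping. The one point that needs care — and the place a careless version would go wrong — is the fibre at $x=\infty$, and relatedly the implicit claim that both a maximal and a pointless genus $g$ hyperelliptic curve over an odd field can be taken in the form $y^2=P(x)$ with $\deg P=2g+2$ and suitably chosen leading coefficient. Once one pins down that normalisation using the second affine chart $y^2 = x^{2g+2}P(1/x)$ from the Preliminaries, everything else follows from the single algebraic fact that multiplication by a fixed nonsquare is a bijection exchanging the nonzero squares and the nonsquares of $\F_q^\times$ while fixing $0$.
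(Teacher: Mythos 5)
Your argument is correct in odd characteristic and rests on the same underlying object as the paper's proof, namely the quadratic twist $y^2=cP(x)$ with $c$ a nonsquare; the difference is in how the point count is shown to flip. The paper does not touch the fibres at all: it invokes the Weil theorem to write $N_1(\Cs)=1+q-\Tr(Fr_{\Cs})$, notes that $\F_q$-maximality forces $\Tr(Fr_{\Cs})=-1-q$, and uses the fact that the quadratic twist negates the trace of Frobenius, so the twist has $1+q-(1+q)=0$ points. That argument is three lines and is uniform in the characteristic. Your version replaces this with explicit fibre-by-fibre bookkeeping over $\Pj^1(\F_q)$, including the chart at infinity; it is more elementary (no cohomology, no Weil theorem) and has the side benefit of making visible \emph{why} maximality and pointlessness are exchanged (multiplication by a fixed nonsquare swaps squares and nonsquares), and of showing along the way that both a maximal and a pointless hyperelliptic curve must have a degree $2g+2$ model with no rational branch points. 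The cost is length and one genuine limitation: as written your proof assumes $q$ odd (``so $Q=0$''), whereas the lemma is stated for arbitrary $q$ and the paper's twist-of-Frobenius argument covers characteristic $2$ as well (there the twist is Artin--Schreier, $y^2+Qy=P\mapsto y^2+Qy=P+\alpha Q^2$ with $\Tr_{\F_q/\F_2}\alpha=1$, and your fibre analysis via the quadratic character breaks down). Since the paper only ever applies the lemma for odd $q$, this does not affect the results, but you should either restrict the statement you are proving or add the even case.
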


\begin{proof}
Let $\Cs$ be a smooth $\F_q$-maximal genus $g$ hyperelliptic curve over $\F_q$. The Weil theorem implies that
\[
\Tr(Fr_{\Cs})=1+q-N_1(\Cs)=-1-q,
\]
where $\Tr(Fr_{\Cs})$ stands for the trace of the Frobenuis endomorphism of $H^1 (\Cs,\Q_l)$.
  
Let $\Cs^{\prime}$ be the $\F_q$-quadratic twist of the curve $\Cs$, then $\Tr(Fr_{\Cs^{\prime}})=-\Tr(Fr_{\Cs})$ and
\[
N_1(\Cs^{\prime})=1+q-\Tr(Fr_{\Cs^{\prime}})=0.
\]
Since the $\F_q$-quadratic twist of $\Cs^{\prime}$ is isomorphic over $\F_q$ to $\Cs$, the lemma is proven.
\end{proof}

Let $q$ be an odd prime power. In this case the method of finding pointless curves we use is similar to the one used in \cite{Becker13} and is as follows. We denote by $\chi_q(x)$ the quadratic character of $\F_q$. Let $F(x)$ be a polynomial over $\F_q$ such that
\begin{enumerate}
\item $F(x)$ is monic and square free of degree $2g+2$, $g>1$, \label{cnd:first}
\item $\chi_q(F(x))=1$ for any $x\in\F_q$.\label{cnd:second}
\end{enumerate}
Denote by $\Cs$ the hyperelliptic curve over $\F_q$ with an affine model $y^2=F(x)$. It follows from $\eqref{cnd:first}$ that $\Cs$ is a non-singular curve of genus $g$. Since $F(x)$ satisfies $\eqref{cnd:second}$, we see that the number of affine $\F_{q}$-points of the curve is $2q$. Since all the points of $\Cs$ at infinity satisfy $y^2=x^{2g+2} F(x^{-1})$ with $x=0$ and $F(x)$ is monic, $\Cs$ has two points at infinity. Thus the curve $\Cs$ is $\F_{q}$-maximal, and by Lemma $\ref{lm:maximal_pointless}$ there exists a non-singular hyperellptic curve of genus $g$ over $\F_q$ having no rational points. The main obstacle to applying the method is finding a polynomial over $\F_q$ satisfying the conditions $\eqref{cnd:first}$ and $\eqref{cnd:second}$. 

We begin with the study of the family of hyperelliptic curves $y^2=f_{g,l,a}(x)$, where
\begin{equation}
\label{eq:curves_type_1}
f_{g,l,a}(x) = x^{2g+2} - x^{2g+2-l(q-1)} + a^2.
\end{equation}
We denote by $\Ls(q,g)$ the quantity $\left\lfloor(2g+2)/(q-1)\right\rfloor$. Given an integer $g$ and $a$ an element of $\F_q$, the number of all the polynomials of the form $\eqref{eq:curves_type_1}$ is precisely $\Ls(q,g)$. We will also need the parameter $\D(q,g)=\gcd(2g+2,q-1)$ in the next section.

This family includes most of the curves considered in \cite{Becker13}. The polynomial $f_{g,l,a}(x)$, obviously, satisfies $\eqref{cnd:second}$ when $a\not=0$. In order to check $\eqref{cnd:first}$, we use the following lemma as a central tool$\colon$

\begin{lemma}\label{lmm:discriminant}
A polynomial $x^n - x^{n-m} + a$ has multiple roots if and only if
\[
n^N a^M - m^M (n-m)^{N-M}  = 0,
\]
where $d=\gcd(n,m)$, $N=n/d$, $M=m/d$.
\end{lemma}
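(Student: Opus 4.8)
The plan is to compute the discriminant of the polynomial $h(x) = x^n - x^{n-m} + a$ and show it vanishes exactly when the stated relation holds. Recall that up to sign the discriminant of a monic polynomial $h$ of degree $n$ is the resultant $\mathrm{Res}(h,h')$, and $h$ has a multiple root in $\overline{\F_q}$ precisely when this resultant is zero. So first I would write $h'(x) = n x^{n-1} - (n-m) x^{n-m-1} = x^{n-m-1}\bigl(n x^m - (n-m)\bigr)$. The factor $x^{n-m-1}$ contributes roots at $x=0$, but $h(0) = a$, so $x=0$ is a root of $h$ only if $a = 0$, in which case the relation $n^N a^M - m^M(n-m)^{N-M} = 0$ also needs checking separately; I would handle the case $a=0$ at the outset (then $h = x^{n-m}(x^m - 1)$ always has the multiple root $0$ when $n-m \ge 2$, and the formula reads $-m^M(n-m)^{N-M}$, so one must be slightly careful with small exponents — more on this below).

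Assuming $a \ne 0$, the only possible common roots of $h$ and $h'$ are the roots of $g(x) := n x^m - (n-m)$. I would use the multiplicativity of the resultant: $\mathrm{Res}(h, h') = \mathrm{Res}(h, x^{n-m-1}) \cdot \mathrm{Res}(h, g)$ up to the appropriate leading-coefficient factors, and since $\mathrm{Res}(h, x^{n-m-1}) = h(0)^{n-m-1} = a^{n-m-1} \ne 0$, the vanishing is governed entirely by $\mathrm{Res}(h, g)$. Now $\mathrm{Res}(h,g) = n^{\deg h} \prod_{\beta : g(\beta)=0} h(\beta)$ (again up to sign/leading-coefficient bookkeeping), so I need to evaluate $h$ at the roots $\beta$ of $n x^m = n - m$. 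At such a $\beta$ we have $\beta^m = (n-m)/n$, hence $x^{n-m} = (\beta^m)^{(n-m)/m}$ — but $(n-m)/m$ need not be an integer, so the clean way is to introduce $d = \gcd(n,m)$, $N = n/d$, $M = m/d$, write $n - m = d(N - M)$, and note $\beta^{md} = \beta^{dm}$, i.e. work with the power $\beta^{dm} = ((n-m)/n)^{d}$ after raising. Concretely, from $n\beta^m = n-m$ one gets $\beta^{n-m} = \beta^{d(N-M)}$ and $(\beta^m)^{N-M} = ((n-m)/n)^{N-M}$; combined with $(\beta^m)^{M} \cdot$ something, a short manipulation expresses $h(\beta) = \beta^{n-m}(\beta^m - 1) + a$ in terms of $(n-m)/n$, and taking the product over all $m$ roots $\beta$ (counted with multiplicity in $\overline{\F_q}$, which is fine since we only care about vanishing) collapses the $\beta$-dependence into powers of $n$ and $n-m$, yielding after clearing denominators exactly $n^N a^M - m^M (n-m)^{N-M}$ up to a nonzero monomial factor in $n$ and $a$.

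The main obstacle I expect is the bookkeeping of constants and small-exponent edge cases: keeping track of leading coefficients in the resultant formula (the factor $n^{\deg g}$ versus $\mathrm{lc}(g)^{\deg h}$), making sure the exponent $n - m - 1$ is nonnegative so that the factorization of $h'$ makes sense (if $m = n-1$ then $h' = nx^{n-1} - x^{0}\cdot\ldots$, a boundary case to check by hand), and verifying that when $\gcd(n,m) > 1$ the roots $\beta$ of $g$ — which are the $m$-th roots of $(n-m)/n$ — are taken consistently, since $\overline{\F_q}$ may not contain all of them as distinct elements in positive characteristic but the resultant is still computed formally. Once the exponent $N$ on $n$, $M$ on $a$, and $N - M$ on $n - m$ are pinned down by this computation, the equivalence is immediate: $h$ has a multiple root iff $\mathrm{Res}(h,h') = 0$ iff (dividing by the nonzero factor) $n^N a^M - m^M(n-m)^{N-M} = 0$, which is the claim. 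I would also sanity-check the formula on a trivial instance such as $n = 2$, $m = 1$ (giving $h = x^2 - x + a$, discriminant $1 - 4a$, and $d=1$, $N=2$, $M=1$, so the formula reads $4a - 1\cdot 1 = 4a - 1$, matching up to sign).
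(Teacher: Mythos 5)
Your approach is sound and, at bottom, the same mathematics as the paper's: the paper proves this lemma in one line by citing the trinomial discriminant formula of Greenfield--Drucker (for $x^n+ax^k+b$ the discriminant is $(-1)^{n(n-1)/2}b^{k-1}\left[n^Nb^{N-K}-(-1)^N(n-k)^{N-K}k^Ka^N\right]^d$), whereas you rederive that formula for the special trinomial at hand via $\mathrm{Res}(h,h')=\mathrm{Res}(h,x^{n-m-1})\cdot\mathrm{Res}(h,g)$ and the product formula over the roots of $g(x)=nx^m-(n-m)$. Your computation does close up: at a root $\beta$ of $g$ one gets $h(\beta)=a-\frac{m}{n}\beta^{n-m}$, the map $\beta\mapsto\beta^{n-m}$ is $d$-to-$1$ onto the solutions of $y^M=((n-m)/n)^{N-M}$, and the product collapses to $n^{-N}\left(n^Na^M-m^M(n-m)^{N-M}\right)$ raised to the power $d$, with the stray powers of $n$ and the factor $a^{n-m-1}$ being nonzero. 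So the self-contained route buys independence from the citation at the cost of the bookkeeping you describe. Two of your flagged worries deserve emphasis rather than dismissal. First, the $a=0$ caveat is not just an edge case to "handle at the outset": the biconditional as stated genuinely fails there in positive characteristic (e.g.\ $x^4-x^2$ over $\F_3$ has the double root $0$ while $n^Na^M-m^M(n-m)^{N-M}=-4\neq 0$), because of the discarded factor $a^{n-m-1}$; the lemma is only used in the paper with $a\neq 0$, so you should simply add that hypothesis. Second, your product formula divides by the leading coefficient $n$ of $g$, which degenerates when $p\mid n$ --- a case the paper actually uses (it notes $s_{g,l,a}\neq 0$ whenever $p\mid 2g+2$). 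The clean fix, which you gesture at with "computed formally" but should state explicitly, is to carry out the whole resultant computation for $x^n+cx^{n-m}+a$ over $\Q$ with indeterminate coefficients, observe that the resulting identity has integer coefficients, and then specialize to $\F_q$; with those two points made precise your argument is complete.
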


This lemma can be regarded as a direct application of \cite[Theorem $4$]{Greenfield84}, for example. It claims that one can compute the discriminant of a trinomial $x^n + a x^k + b$ as
\[
(-1)^{n(n-1)/2} b^{k-1}\left[n^N b^{N-K} - (-1)^N (n-k)^{N-K} k^K a^N \right]^d,
\]
where $d=\gcd(n,k)$, $N=n/d$, $K=k/d$.

According to Lemma $\ref{lmm:discriminant}$, the polynomial $f_{g,l,a}(x)$ satisfies $\eqref{cnd:first}$ if and only if $g>1$ and the quantity
\begin{equation}
\label{eq:s_definition}
s_{g,l,a} = (2g+2)^N a^{2M} - (-l)^M (2g+2+l)^{N-M}
\end{equation}
is different from zero. Here $d=\gcd(2g+2,l(q-1))$, $N=n/d$ and $M=l(q-1)/d$.

\begin{remark}
The last lemma immediately implies that, if $q$ is odd, then, given an integer $g \geq g_q^1 = p(q-1)/2-1$, there is a non-singular genus $g$ curve over $\F_p$ having no $\F_q$-points. This estimate is of the same order as the one in Theorem $\ref{thm:BG_result1}$. Let us prove it now.

We want to show that there are $a\in\F_q^*$ and $1\leq l\leq\Ls(q,g)$ such that the curve $y^2=f_{g,l,a}(x)$ is an $\F_q$-maximal smooth hyperelliptic curve over $\F_p$ of genus $g$.

Choose an element of $\F_p^*$, say $a$. If $p \mid 2g+2$, then $s_{g,l_0,a}\not=0$, when $l_0=1$. Assume that $p$ does not divide $2g+2$. Since $g\geq g_q^1$, we obtain $\Ls(q,g)\geq p$. Hence we can choose $1\leq l_0 \leq \Ls(q,g)$ such that $l_0 \equiv -(2g+2)\mod{p}$. It follows that $s_{g,l_0,a}\not=0$. Thus, for any $g\geq g_q^1$ there is a number $l_0$ such that $1\leq l_0 \leq \Ls(q,g)$ and the polynomial $f_{g,l_0,a}(x)$ is square free by Lemma $\ref{lmm:discriminant}$. 

It remains to apply Lemma $\ref{lm:maximal_pointless}$.
\end{remark}

We also note that this construction and some of those from the next section are related to the question of the existence of curves having no points of degree greater than one.

\section{Proof of Theorem $\ref{thm:linear_bound}$, the case of odd characteristic}
\label{sec:the_proof_odd}

Along the section $q$ stands for a power of an odd prime $p$. For such $q$ the proof of Theorem $\ref{thm:linear_bound}$ consists of several parts depending on the values of the parameters $p$, $q$, $g$, $\D(q,g)$, and $\Ls(q,g)$. We first treat the easy case, when $\D(q,g)>2$.

\begin{proposition}\label{prop:gcd_is_over_two} 
Let $g$ be such that $g\geq \frac{q-1}{2}$ and $\D(q,g)>2$. Then there is a smooth $\F_q$-maximal hyperelliptic curve over $\F_q$ of genus $g$ of the form $y^2 = f_{g,l,a}(x)$.
\end{proposition}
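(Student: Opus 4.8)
The plan is to stay inside the family $\eqref{eq:curves_type_1}$, fix once and for all the index $l=1$, and look for a good constant $a\in\F_q^*$. First I would check that $l=1$ is admissible and yields an honest trinomial: since $g\geq\frac{q-1}{2}$ we have $2g+2\geq q+1>q-1$, so $\Ls(q,g)\geq 1$ and $2g+2-(q-1)=2g+3-q\geq 2$; moreover $d_0:=\D(q,g)=\gcd(2g+2,q-1)$ divides $(2g+2)-(q-1)=2g+3-q$, so with $N=(2g+2)/d_0$ and $M=(q-1)/d_0$ the quantity $N-M=(2g+3-q)/d_0$ is a positive integer. Condition $\eqref{cnd:second}$ is then automatic for every $a\neq 0$: for $x\in\F_q^*$ one has $x^{q-1}=1$, whence $x^{2g+2}=x^{2g+2-(q-1)}$ and $f_{g,1,a}(x)=a^2$, and also $f_{g,1,a}(0)=a^2$, so the value is always a nonzero square. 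Hence the problem reduces to the square-freeness of $f_{g,1,a}(x)$, which by Lemma~\ref{lmm:discriminant}---see $\eqref{eq:s_definition}$ with $l=1$---is equivalent, since $g>1$, to
\[
s_{g,1,a}=(2g+2)^N a^{2M}-(-1)^M(2g+3)^{N-M}\neq 0\qquad\text{in }\F_q.
\]

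I would then dispatch three cases according to the residues of $2g+2$ and $2g+3$ modulo $p$. If $p\mid 2g+3$, then $(2g+3)^{N-M}=0$ because $N-M\geq 1$, while $2g+2\equiv -1\neq 0$, so $s_{g,1,a}=(2g+2)^N a^{2M}\neq 0$ for every $a\in\F_q^*$. If $p\mid 2g+2$, then $(2g+2)^N=0$ and $2g+3\equiv 1$, so $s_{g,1,a}=-(-1)^M\neq 0$ for every $a\in\F_q^*$. The remaining case $p\nmid(2g+2)(2g+3)$ is the crux: writing $c=(-1)^M(2g+3)^{N-M}(2g+2)^{-N}\in\F_q^*$, we have $s_{g,1,a}=(2g+2)^N\bigl(a^{2M}-c\bigr)$, so it suffices to exhibit $a\in\F_q^*$ with $a^{2M}\neq c$. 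The image of $a\mapsto a^{2M}$ on the cyclic group $\F_q^*$ has order $(q-1)/\gcd(2M,q-1)$; using $q-1=d_0M$ and $\gcd(2M,d_0M)=M\gcd(2,d_0)$, this order equals $d_0/\gcd(2,d_0)$, which is at least $2$ exactly because $d_0=\D(q,g)>2$. Therefore the set of $(2M)$-th powers in $\F_q^*$ has at least two elements, so it cannot be the singleton $\{c\}$, and a suitable $a$ exists.

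Finally I would assemble the pieces. The hypotheses force $q\geq 5$---indeed $\D(3,g)=\gcd(2g+2,2)=2$ never exceeds $2$---hence $g\geq\frac{q-1}{2}\geq 2>1$, and for the $a$ just found $f_{g,1,a}(x)$ satisfies $\eqref{cnd:first}$ and $\eqref{cnd:second}$; as recalled in Section~\ref{sec:preliminaries}, the curve $y^2=f_{g,1,a}(x)$ is then smooth of genus $g$, carries $2q$ affine $\F_q$-points and two points at infinity, hence is $\F_q$-maximal, as required. The main obstacle is the third case above: it is the only place where $\D(q,g)>2$ is used, and it comes down to guaranteeing that $\F_q^*$ contains at least two $(2M)$-th powers so that the unique forbidden value $c$ can be dodged. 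Everything else is either an immediate consequence of $g\geq\frac{q-1}{2}$ or a routine residue check; I would nevertheless double-check the degenerate possibilities ($N-M=0$, $2g+3-q$ too small, or $a=0$) to confirm that none of them can arise under the stated hypotheses.
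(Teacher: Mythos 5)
Your proposal is correct and follows essentially the same route as the paper: fix $l=1$, reduce to $s_{g,1,a}\neq 0$ via Lemma~\ref{lmm:discriminant}, and use that $\D(q,g)>2$ forces the map $a\mapsto a^{2(q-1)/\D(q,g)}$ to be non-constant on $\F_q^*$ (the paper phrases this as a contradiction with element orders rather than counting the image, but it is the same fact). Your explicit handling of the degenerate cases $p\mid 2g+2$ and $p\mid 2g+3$ is a welcome bit of extra care that the paper leaves implicit.
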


\begin{proof}
Let us take $l=1$ and suppose that for any $a\in\F_q^*$ the quantity $s_{g,l,a}$ is equal to zero. Then the value of
\[
a^{\frac{2(q-1)}{\D(q,g)}}=(-1)^{\frac{q-1}{\D(q,g)}}(2g+3)^{\frac{2g+2-(q-1)}{\D(q,g)}} (2g+2)^{-\frac{2g+2}{\D(q,g)}}
\]
does not depend on the choice of $a$. Since this identity holds for $a=1$, we see that $a^{2(q-1)/\D(q,g)}$ is equal to $1$ for every $a\in\F_q^*$. It means that the order of every element of $\F_q^*$ divides $2(q-1)/\D(q,g)<q-1$, which is a contradiction. 

Hence there is some $a\in\F_q^*$ such that the polynomial $f_{g,1,a}(x)$ has no multiple roots and the curve $y^2=f_{g,1,a}(x)$ is the desired one.
\end{proof}

We would like to draw attention to the similarity between the following technical lemma and Theorem $\ref{thm:BG_result2}$.

\begin{lemma}\label{lmm:system}
Let $a\in\F_q^*$. Suppose that for some $g\geq \max\left\{\frac{q-1}{2},2\right\}$ the quantities $s_{g,l,a}$ are equal to $0$ for all $1\leq l\leq \Ls(q,g)$; then the equations $(1+lx)^{\D(q,g)}\equiv1\mod{p}$, $1\leq l\leq \Ls(q,g)$, have a common non-zero solution in $\F_p^*$.
\end{lemma}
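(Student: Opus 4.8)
The plan is to extract arithmetic information from each hypothesis $s_{g,l,a}=0$ and to massage it into the claimed congruence. Recall that, by $\eqref{eq:s_definition}$, for a fixed $l$ with $1\leq l\leq\Ls(q,g)$ we have
\[
s_{g,l,a} = (2g+2)^{N_l} a^{2M_l} - (-l)^{M_l} (2g+2+l)^{N_l-M_l},
\]
where $d_l=\gcd(2g+2,l(q-1))$, $N_l=(2g+2)/d_l$, and $M_l=l(q-1)/d_l$. The first step is to observe that $d_l$ is, up to the factor coming from $l$, governed by $\D(q,g)=\gcd(2g+2,q-1)$; more precisely, writing $d_1=\D(q,g)$ one checks $\gcd(2g+2,l(q-1)) = \gcd(2g+2,l\,\D(q,g))\cdot(\text{correction})$, and in any case the exponent $2M_l$ appearing on $a$ is always a multiple of $2(q-1)/\D(q,g)$. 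Hence, exactly as in the proof of Proposition $\ref{prop:gcd_is_over_two}$, I would first rule out the possibility that $s_{g,l,a}=0$ forces $a^{2(q-1)/\D(q,g)}$ to be a root of unity of too small order; but here we are \emph{given} that $s_{g,l,a}=0$ for a specific $a\in\F_q^*$ and all $l$, so instead this $a$ itself becomes the candidate common solution $x$ we are looking for.

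Next I would rewrite the condition $s_{g,l,a}=0$ multiplicatively. Dividing through (none of $2g+2$, $2g+2+l=2g+3,\dots$ is divisible by $p$ in the cases that matter, and if one is divisible by $p$ the relation $s_{g,l,a}=0$ is handled separately and easily), we get
\[
a^{2M_l} = (-l)^{M_l}\,(2g+2+l)^{N_l-M_l}\,(2g+2)^{-N_l}
= \left(\frac{-l\,(2g+2+l)}{\,(2g+2)\,}\cdot\frac{(2g+2+l)^{-1}\cdots}{}\right)^{\!?}
\]
— the precise bookkeeping is routine — and after taking $\D(q,g)/\!\gcd(\cdot)$-th powers to clear the exponents one should arrive at a relation of the shape
\[
\left(1 + l\,a^{-1}\cdot c\right)^{\D(q,g)} \equiv 1 \pmod p
\]
for a single fixed constant $c$ independent of $l$ (coming from the $l=1$ normalization), valid for every $l$ in the range. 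Comparing the $l=1$ instance with the general $l$ instance is what pins down $c$ and lets one absorb it into the choice $x := c\,a^{-1}$ (or $x:=a$ after rescaling), giving the system $(1+lx)^{\D(q,g)}\equiv 1\pmod p$ for $1\leq l\leq\Ls(q,g)$ with the common nonzero solution $x\in\F_p^*$; nonzero because $a\in\F_q^*$ and because $x=0$ would force $1\equiv 1$ trivially but also contradict the content extracted from the $l=1$ equation together with $\D(q,g)$ being what it is.

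The main obstacle I anticipate is precisely the exponent bookkeeping in the previous paragraph: the integers $N_l$ and $M_l$ depend on $l$ through $d_l=\gcd(2g+2,l(q-1))$, so the naive "take both sides to a common power" step has to be done with care to make sure (i) the resulting base is genuinely $1+lx$ with the \emph{same} $x$ for all $l$, and (ii) the power to which one raises is exactly $\D(q,g)$ and not some divisor or multiple of it. I would handle this by factoring $2g+2 = \D(q,g)\cdot m$ and $q-1=\D(q,g)\cdot r$ with $\gcd(m,r)=1$, expressing $d_l$, $N_l$, $M_l$ explicitly in terms of $m$, $r$, $l$, and then verifying that the exponent of $a$ in $s_{g,l,a}=0$ is always $2$ times a multiple of $r$, so that $a^{2r}$ (a fixed quantity) is what controls everything; the hypothesis $g\geq\max\{(q-1)/2,2\}$ guarantees $\Ls(q,g)\geq 1$ (indeed the curves $f_{g,l,a}$ exist) and that degrees are large enough for Lemma $\ref{lmm:discriminant}$ to apply, i.e. that square-freeness really is equivalent to $s_{g,l,a}\neq 0$. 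Once the algebraic identity is in the clean form above, the conclusion is immediate.
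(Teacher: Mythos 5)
There is a genuine gap: you never actually identify the common solution, and the candidate you do gesture at ($x:=a$, or $x:=c\,a^{-1}$ ``after rescaling'') is wrong. The correct common solution is $x=(2g+2)^{-1}\bmod p$, which does not depend on $a$ at all; indeed it \emph{cannot} depend on $a$, since the lemma asserts a solution in $\F_p^*$ while $a$ is an arbitrary element of $\F_q^*$ and need not lie in the prime field. The step you defer as ``routine bookkeeping'' (your displayed equation literally has a question mark for an exponent) is exactly where the content lies, and the efficient way through it is the opposite of your plan: rather than tracking the fixed quantity $a^{2(q-1)/\D(q,g)}$, one raises the identity
\[
(2g+2)^{N}a^{2M}\equiv(-l)^{M}(2g+2+l)^{N-M}\pmod p,\qquad N=\tfrac{2g+2}{d},\quad M=\tfrac{l(q-1)}{d},\quad d=\gcd(2g+2,l(q-1)),
\]
to the $d$-th power. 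Every exponent then becomes an honest integer multiple of $q-1$ except the net exponent on $2g+2$ and $2g+2+l$: one gets $a^{2l(q-1)}=1$, $(-l)^{l(q-1)}=1$ and $(2g+2+l)^{-l(q-1)}=1$ (the relevant bases are nonzero mod $p$, else $s_{g,l,a}\neq0$ already), leaving $(2g+2)^{2g+2}\equiv(2g+2+l)^{2g+2}$, i.e. $\bigl(1+(2g+2)^{-1}l\bigr)^{2g+2}\equiv1\pmod p$. Combining this with $\bigl(1+(2g+2)^{-1}l\bigr)^{q-1}=1$ yields the exponent $\gcd(2g+2,q-1)=\D(q,g)$. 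This also dissolves the difficulty you flag about $d_l=\gcd(2g+2,l(q-1))$ varying with $l$: after the $d$-th power no gcd's remain, so there is nothing to reconcile across different $l$.

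A smaller but real problem is your nonvanishing argument (``nonzero because $a\in\F_q^*$ and because $x=0$ would \dots''), which is circular as written. With the correct solution $x=(2g+2)^{-1}$ it is immediate: the hypothesis $s_{g,1,a}=0$ together with $\eqref{eq:s_definition}$ forces $p\nmid 2g+2$ (otherwise $s_{g,1,a}$ would reduce to $\pm(2g+3)^{N-M}\neq0$), so $(2g+2)^{-1}$ is a well-defined element of $\F_p^*$.
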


\begin{proof}
Since $s_{g,l,a}=0$ for all $l$'s, it follows from $\eqref{eq:s_definition}$ that $2g+2 \not \equiv 0 \mod{p}$, and for each $l$ there is an identity
\[
(2g+2)^{\frac{2g+2}{d}} a^{2\frac{l(q-1)}{d}} \equiv (-l)^{\frac{l(q-1)}{d}} (2g+2+l)^{\frac{2g+2-l(q-1)}{d}} \mod{p},
\]
where $d=\gcd(2g+2,l(q-1))$. After rising to power $d$ both of its sides and some evident transformation, we get
\[
(1+(2g+2)^{-1}l)^{2g+2} \equiv 1 \mod{p},
\]
for each $l$, and since $1+(2g+2)^{-1}l$ is an element of $\F_q^*$, it is equivalent to
\[
(1+(2g+2)^{-1}l)^{\gcd(2g+2,q-1)}=(1+(2g+2)^{-1}l)^{\D(g,q)}\equiv 1\mod{p},
\]
for every $1\leq l\leq \Ls(q,g)$.
\end{proof}

In the case of odd $p$ we show below that if $g\geq \max\left\{(q-1)/2,2\right\}$, $\D(q,g)=2$ or $g = (q-3)/2$, $p>5$, then there is a non-singular $\F_q$-maximal hyperelliptic genus $g$ curve defined over the prime field $\F_p$. The proposition below shows that if the parameter $\D(q,g)$ takes the minimal possible value and $\Ls(q,g)\geq 2$, then one can choose a suitable $1\leq l\leq 2$.

\begin{proposition}\label{prop:L_is_over_two}
Let $g$ be an integer such that $g\geq \max\left\{\frac{q-1}{2},2\right\}$, let $\D(q,g)=2$ and $\Ls(q,g)\geq 2$. Then there is a smooth $\F_q$-maximal hyperelliptic curve over $\F_p$ of genus $g$ of the form $y^2 = f_{g,l,a}(x)$.
\end{proposition}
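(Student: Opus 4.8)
The plan is to apply Lemma~\ref{lmm:system}. Suppose, for contradiction, that no polynomial $f_{g,l,a}(x)$ with $1 \le l \le \Ls(q,g)$ and $a \in \F_q^*$ gives a smooth curve; then the hypothesis of Lemma~\ref{lmm:system} is met for every $a$, so for each fixed $a$ (equivalently, independently of $a$, since the conclusion does not mention $a$) the system of congruences $(1+lx)^{\D(q,g)} \equiv 1 \pmod p$ for $1 \le l \le \Ls(q,g)$ has a common nonzero solution $x_0 \in \F_p^*$. Since we are in the case $\D(q,g)=2$, this says $(1+lx_0)^2 \equiv 1 \pmod p$, i.e. $1+lx_0 \equiv \pm 1 \pmod p$, for each such $l$. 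Because $x_0 \ne 0$ and $l \ne 0$, the choice $1+lx_0 \equiv 1$ forces $l \equiv 0 \pmod p$, which is impossible for $l=1$. Hence $1 + 1\cdot x_0 \equiv -1$, giving $x_0 \equiv -2 \pmod p$.

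Now I use $\Ls(q,g) \ge 2$: taking $l=2$ in the system forces $1 + 2x_0 \equiv \pm 1 \pmod p$. Substituting $x_0 \equiv -2$ yields $1 - 4 \equiv \pm 1$, i.e. $-3 \equiv \pm 1 \pmod p$, so either $p \mid 4$ or $p \mid 2$. Since $p$ is an odd prime this is a contradiction. (One should record the small-$p$ bookkeeping: the argument uses that $2 \ne 0$ in $\F_p$, which holds; and that the congruences are genuine constraints, which is where $g \ge \max\{(q-1)/2, 2\}$ and the standing assumption $\D(q,g)=2$ enter, guaranteeing that $s_{g,l,a}$ is the relevant quantity and that the curve $y^2 = f_{g,l,a}(x)$ indeed has genus $g$ and is $\F_q$-maximal once $f_{g,l,a}$ is square free.) Therefore the supposition fails, so there exist $l \in \{1,2\}$ and $a \in \F_q^*$ with $f_{g,l,a}(x)$ square free; the corresponding curve $y^2 = f_{g,l,a}(x)$ is then a smooth $\F_q$-maximal hyperelliptic curve over $\F_p \subseteq \F_q$ of genus $g$, as desired.

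The main obstacle I anticipate is not the algebra above, which is short, but making sure the reduction to Lemma~\ref{lmm:system} is airtight: one must verify that the negation of the conclusion really does force $s_{g,l,a}=0$ for \emph{all} $l \le \Ls(q,g)$ \emph{simultaneously} and for every $a \in \F_q^*$, so that Lemma~\ref{lmm:system} can be invoked. In other words, the failure of the proposition means that for each $(l,a)$ the polynomial $f_{g,l,a}$ is not square free, hence $s_{g,l,a}=0$; fixing any single $a \in \F_q^*$ then makes all the hypotheses of Lemma~\ref{lmm:system} hold, and its conclusion — a common root $x_0$ of the $\Ls(q,g)$ congruences — is exactly what drives the contradiction. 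A secondary point to check is that $f_{g,l,a}$ being square free is truly equivalent to $s_{g,l,a} \ne 0$ together with $g > 1$, which is already recorded via Lemma~\ref{lmm:discriminant}; the hypothesis $g \ge 2$ covers the $g>1$ requirement.
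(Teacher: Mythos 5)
Your proof is correct and follows essentially the same route as the paper: fix $a$, assume all $s_{g,l,a}$ vanish, invoke Lemma~\ref{lmm:system} with $\D(q,g)=2$, note that $l=1$ forces $x_0\equiv-2$ while $l=2$ then fails, and conclude via Lemma~\ref{lmm:discriminant}. The only cosmetic point is that to get a curve defined over $\F_p$ (as the statement asserts) you should fix $a\in\F_p^*$ rather than $a\in\F_q^*$; since your argument works for any fixed $a$, this changes nothing.
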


\begin{proof} 
Chose any $a\in\F_p^*$. Suppose that $s_{g,l,a}=0$ for all $1\leq l\leq \Ls(q,g)$. According to Lemma $\ref{lmm:system}$, there is a system of at least two equations$\colon$
\[
(1+x)^2=1, \quad (1+2x)^2=1, \quad \ldots
\] 
which has a solution in $\F_p^*$, say $x_0$. It is obvious that $x_0\equiv -2\mod{p}$ is the unique solution of the first equation, but $x_0$ does not satisfy the second equation. It means that the system has no solutions in $\F_p^*$, and $s_{g,l,a}\not=0$ for some $l$. It is clear that one can let either $l=1$ or $l=2$. Hence it is possible to construct a curve with desired properties, using either $f_{g,1,a}(x)$ or $f_{g,2,a}(x)$.
\end{proof}

\begin{remark} 
As a direct corollary of Propositions $\ref{prop:gcd_is_over_two}$, $\ref{prop:L_is_over_two}$ and Lemma $\ref{lm:maximal_pointless}$ we obtain a linear bound on the genus as follows. Suppose $g\geq \max{ \left\{ q-2,2 \right\} }$; then $\Ls(q,g) \geq 2$, so there is a smooth pointless genus $g$ curve over $\F_q$. This refines \cite[Corollary 2.9]{Becker13}. However, our goal is to prove a stronger bound.
\end{remark}

The only possibility that remains to be treated is $\Ls(q,g)=1$, $D(q,g)=2$.

\begin{proposition}\label{prop:old_curve}
Let $g$ be such that $g\geq \max\left\{(q-1)/2,2\right\}$, $D(q,g)=2$, and $\Ls(q,g)=1$. Assume that at least one of the following conditions holds$\colon$
\begin{enumerate}
\item $q$ is a square,
\item $q=p^{2n+1}$ and $p\equiv 1 \mod{8}$,
\item $g$ is not of the form $\frac{p(2k+1)-5}{4}$, $k\geq 0$.
\end{enumerate}
Then there is a non-singular $\F_q$-maximal hyperelliptic curve over $\F_p$ of genus $g$ of the form $y^2 = f_{g,l,a}(x)$.
\end{proposition}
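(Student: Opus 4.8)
The plan is to use the same reduction as before: by Lemma~\ref{lm:maximal_pointless} it suffices to produce a smooth $\F_q$-maximal genus $g$ curve of the form $y^2 = f_{g,l,a}(x)$, and since $\Ls(q,g)=1$ the only available value is $l=1$. So I fix $l=1$ and seek $a\in\F_p^*$ with $s_{g,1,a}\neq 0$. By Lemma~\ref{lmm:system} (applied with $\Ls(q,g)=1$), if $s_{g,1,a}=0$ for \emph{all} $a\in\F_p^*$, then $(1+x)^{\D(q,g)}\equiv 1\bmod p$ has a solution in $\F_p^*$; since $\D(q,g)=2$ this forces $(1+x)^2\equiv 1$, whose only nonzero solution is $x\equiv -2$. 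Tracing back through the derivation in Lemma~\ref{lmm:system}, the vanishing $s_{g,1,a}=0$ at the specific value $x=(2g+2)^{-1}\equiv -2$, i.e. $2g+2\equiv -\tfrac12 \equiv \tfrac{p-1}{2}\bmod p$, is the \emph{only} obstruction; equivalently $2g \equiv \tfrac{p-5}{2}\bmod p$. Writing this out, the bad genera are exactly those congruent mod $p$ to solutions of $4g\equiv p-5$, which (since $p$ is odd) gives $g\equiv \tfrac{p-5}{4}$ or $g\equiv \tfrac{p-5}{4}+\tfrac{p}{2}$ — that is, $g$ of the form $\tfrac{p(2k+1)-5}{4}$ is the sole forbidden residue pattern. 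Hence under hypothesis (3) we are immediately done, since then no $a$ makes $s_{g,1,a}$ vanish identically, so some $a\in\F_p^*$ works.

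When $g$ \emph{does} fall into the bad class, the plan is to salvage the argument by a closer analysis of the equation $s_{g,1,a}=0$ as an equation in $a$. Even in the bad residue class, $s_{g,1,a}=0$ reads (after the transformation in Lemma~\ref{lmm:system}) as $a^{2(q-1)/\D(q,g)} = a^{q-1} = c$ for a specific constant $c\in\overline{\F_p}^{*}$ determined by $g$ and $p$; the point is that while this constant equals $1$ when evaluated formally, the quantity $a^{q-1}$ need not be $1$ for $a\in\F_p^*$ once $q>p$. Concretely, for $a\in\F_p^*$ we have $a^{q-1}=a^{(q-1)}$, and $a^{(q-1)\bmod(p-1)}=a^{0}=1$ only when $q\equiv 1\bmod(p-1)$, which is automatic — so this route forces me instead to reexamine the unraised identity $(2g+2)^N a^{2M} = (-1)^M (2g+2+1)^{N-M}$ over $\F_p$ and ask whether the forced value of $a^{2M}$ is a nonzero square/non-square or even lies in $\F_p$. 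Here $M = (q-1)/\D(q,g) = (q-1)/2$ and $N = (2g+2)/2 = g+1$, so the constraint is $a^{q-1} = (-1)^{(q-1)/2} (2g+3)^{g+1-(q-1)/2}(2g+2)^{-(g+1)}$ in $\F_p$. Since $a\in\F_p^*$, the left side is $1$; so the obstruction is precisely the numerical identity $(-1)^{(q-1)/2}(2g+3)^{g+1-(q-1)/2} = (2g+2)^{g+1}$ in $\F_p$, a condition \emph{independent of $a$}. The strategy in cases (1) and (2) is to show this identity fails. For (1), $q$ a square makes $(q-1)/2$ even, killing the sign; one then compares $p$-adic valuations / multiplicative orders of $2g+3$ and $2g+2$ using $\D(q,g)=2$ (so $g+1$ is odd relative to the relevant parities) to force a contradiction. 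For (2), $p\equiv 1\bmod 8$ controls $(-1)^{(q-1)/2}$ and the quadratic residuosity of $2$, and a parity/Legendre-symbol count on both sides again yields the needed non-identity.

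I expect the main obstacle to be case~(2): unlike (1), where $(q-1)/2$ is simply even, and unlike (3), where we exit before any numerics, in case~(2) one has $q=p^{2n+1}$ with $(q-1)/2$ odd, so the sign $(-1)^{(q-1)/2}=-1$ genuinely enters, and one must extract a contradiction from the residue condition $-(2g+3)^{g+1-(q-1)/2} = (2g+2)^{g+1}$ in $\F_p$ together with $p\equiv 1\bmod 8$ and the constraints $\D(q,g)=2$, $\Ls(q,g)=1$, $g\geq (q-1)/2$. The hypothesis $\Ls(q,g)=1$ pins $2g+2$ into the window $[q-1, 2(q-1))$, hence $2g+2 \equiv 2g+2-(q-1)$ in a controlled range, and $\D(q,g)=2$ forces $(2g+2)/2 = g+1$ to be coprime to $(q-1)/2$; combining these with $p\equiv 1\bmod 8$ (which makes $\tfrac{q-1}{2}\cdot\tfrac{1}{?}$ behave well and makes $2$ a square mod $p$) should let me evaluate the Legendre symbol of each side and see they disagree. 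The delicate point is keeping the exponent $g+1-(q-1)/2$ and its parity under control through the reduction from $q$-powers to $p$-powers; I would handle it by reducing all exponents modulo $p-1$ and modulo $2$ separately and checking the two resulting congruences are incompatible.
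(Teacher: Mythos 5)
Your reduction is sound and matches the paper's up to a point: fixing $l=1$, observing via Lemma \ref{lmm:system} that vanishing of $s_{g,1,a}$ forces $(1+(2g+2)^{-1})^2\equiv 1\bmod p$, hence $2g+2\equiv -2^{-1}\bmod p$, which disposes of hypothesis (3); and noting that since $a^{2M}=a^{q-1}=1$ the obstruction is independent of $a$, so the whole question reduces to whether the single quantity $s_{g,1,1}$ vanishes. (Your aside suggesting $a^{q-1}$ might not equal $1$ for $a\in\F_p^*$ is a false start you then retract, and your claim that the bad class splits into two residues $g\equiv\frac{p-5}{4}$ and $g\equiv\frac{p-5}{4}+\frac{p}{2}$ is muddled --- it is one residue class mod $p$ --- but neither affects the conclusion for case (3).)

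The genuine gap is in cases (1) and (2), which are the substance of the proposition: there you only announce a plan (``compare $p$-adic valuations / multiplicative orders of $2g+3$ and $2g+2$,'' ``a parity/Legendre-symbol count'') without executing it, and the plan as stated would not succeed, because in the unsimplified identity $(-1)^{(q-1)/2}(2g+3)^{N-M}=(2g+2)^{N}$ the residues $2g+2$ and $2g+3$ are not otherwise controlled. The missing idea is to \emph{use} the congruence that defines the bad class: once $2g+2\equiv -2^{-1}$ and hence $2g+3\equiv 2^{-1}\bmod p$, the quantity $(-1)^{M+1}2^{N}s_{g,1,1}$ collapses to $(-1)^{N+M+1}+2^{M}$ with $N=g+1$, $M=(q-1)/2$. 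Since $\D(q,g)=2$ gives $\gcd(N,M)=1$, the two exponents cannot both be even; and $2^{M}=\chi_q(2)$ equals $1$ both when $q$ is a square (every element of $\F_p^*$ is a square in $\F_q$, and $M$ is even, so $N$ is odd) and when $q=p^{2n+1}$ with $p\equiv 1\bmod 8$ (where again $M$ is even and $\chi_q(2)=\chi_p(2)=1$). In both cases the expression equals $2\neq 0$. Without this substitution your Legendre-symbol bookkeeping on the general identity has nothing to grip, so as written cases (1) and (2) remain unproven.
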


\begin{proof}
We want to show that under these assumptions $s_{g,1,a}\not\equiv 0\mod{p}$ for some $a\in\F_p^*$. If $g$ is such that $p$ divides $2g+2$, then $s_{g,1,a}\not\equiv 0\mod{p}$ by its definition. We can therefore assume that $2g+2\not\equiv 0\mod{p}$. In this case it follows from $\eqref{eq:s_definition}$ that
\begin{align*}
s_{g,1,a} & = (2g+2)^{N} a^{2M} - (-1)^{M} ((2g+2)+1)^{N-M}\\
 & = (2g+2)^{N} - (-1)^{M} (2g+2)^{N-M} \left(1+(2g+2)^{-1}\right)^{N-M},
\end{align*}
where $N=g+1$ and $M=(q-1)/2$. Thus the value of $a$ does not affect $s_{g,1,a}$, i.e. $s_{g,1,a}$ coincides with $s_{g,1,1}$ for any $a\in\F_q^*$.

Suppose that $s_{g,1,1}$ is zero; then $(1+(2g+2)^{-1})^2\equiv 1\mod{p}$ by Lemma $\ref{lmm:system}$. This implies $2g+2\equiv (-2)^{-1}\mod{p}$. Hence, if $g\not=\frac{p(2k+1)-5}{4}$ for any integer $k\geq 0$, then $s_{g,1,1}\not=0$.

Let us assume that $(1+(2g+2)^{-1})^2\equiv 1\mod{p}$. We put
\begin{align*}
\Ss =(-1)^{M+1} 2^{N} s_{g,1,1} & = (-1)^{M+1} 2^{N} \left( (-1)^N 2^{-N} - (-1)^{M} 2^{M-N} \right) \\
& = (-1)^{N+M+1} + 2^M.
\end{align*}
It equals zero simultaneously with $s_{g,1,1}$. Also, note that $N$ and $M$ can not be even at the same time, since 
\[
\gcd(N,M)=\gcd(g+1,(q-1)/2)=\D(q,g)/2=1.
\]

Consider several cases. If $q$ is a square, then every element of $\F_p$ is a quadratic residue in $\F_q$. Since $M=(q-1)/2$ is even, we see that $N+M+1$ is even. Thus $\Ss=2$ is not equal to zero, since $p>2$. Therefore, $s_{g,1,1}\not=0$.

Suppose $q$ is not a square, then $\Ss$ depends on the congruence class of $p$ modulo $8$. It is well known that every quadratic (non) residue in $\F_p$ remains one in $\F_q$. Using this fact and the fact that if $M$ is even, then $N$ must be odd, we find that
\[
\Ss = 2^{(q-1)/2}-(-1)^{(q-1)/2}(-1)^{g+1}=\begin{cases}
2, & p\equiv 1 \mod{8},\\
-1+(-1)^{g+1}, & p\equiv 3 \mod{8},\\
0, & p\equiv 5 \mod{8},\\
1+(-1)^{g+1}, & p\equiv 7 \mod{8}.
\end{cases}
\]
This formula immediately implies that $s_{g,1,1}\not=0$ when $p\equiv 1\mod{8}$.
\end{proof}

Further study of curves of type $\eqref{eq:curves_type_1}$ is not fruitful. We finish our proof of Theorem $\ref{thm:linear_bound}$ in the case of odd characteristic taking into account hyperelliptic curves of different forms. 

However, let us first assume that $q$ is prime. This assumption allows to consider a special type of curves which is easier to work with. In particular, applying Lemma~$\ref{lm:maximal_pointless}$ to the following result, one can omit the restriction in \cite[Theorem 1.3]{Becker13}.

\begin{theorem}\label{thm:prime_field}
Let $g$ be an integer such that $g \geq \max\left\{\frac{p-1}{2},2\right\}$. Then there exists a non-singular $\F_p$-maximal hyperelliptic genus $g$ curve over $\F_p$ of one of the following forms$\colon$ 
\[
y^2 = f_{g,l,a}(x) \text{ or } y^2=x^{p+(p-1)/2}+x^{p-1}+x^{(p-3)/2}+1.
\]
\end{theorem}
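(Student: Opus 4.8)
The plan is to split into cases according to the values of $\D(p,g)=\gcd(2g+2,p-1)$ and $\Ls(p,g)=\lfloor(2g+2)/(p-1)\rfloor$, dispose of all but one configuration by invoking the results already established for the family $y^2=f_{g,l,a}(x)$, and treat the single leftover configuration with the curve $y^2=x^{p+(p-1)/2}+x^{p-1}+x^{(p-3)/2}+1$. Since $q=p$ is prime, $\D(p,g)\ge 2$. If $\D(p,g)>2$, Proposition~\ref{prop:gcd_is_over_two} already produces a suitable curve $y^2=f_{g,1,a}(x)$ (its hypothesis $g\ge(p-1)/2$ is part of ours). If $\D(p,g)=2$ and $\Ls(p,g)\ge 2$, i.e.\ $g\ge p-2$, Proposition~\ref{prop:L_is_over_two} produces $y^2=f_{g,1,a}(x)$ or $y^2=f_{g,2,a}(x)$. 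There remains the range $(p-1)/2\le g\le p-3$, where $\D(p,g)=2$ and $\Ls(p,g)=1$; here Proposition~\ref{prop:old_curve} handles every $g$ except when simultaneously $p\not\equiv 1\pmod 8$ and $g=\frac{p(2k+1)-5}{4}$ for some integer $k\ge 0$.

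First I would identify this leftover configuration precisely. Requiring $g=\frac{p(2k+1)-5}{4}$ to be an integer lying in $[(p-1)/2,\,p-3]$, a short arithmetic check gives: if $p\equiv 1\pmod 4$ then integrality forces $k$ even and every such $g$ falls outside the range (the least candidate $(p-5)/4$ lies below $(p-1)/2$, the next one $\tfrac{5(p-1)}{4}$ above $p-3$); if $p\equiv 3\pmod 4$ then $k$ is odd and the only value in the range is $g=\frac{3p-5}{4}$, attained at $k=1$, which forces $p\ge 7$. So the leftover configuration is exactly $p\equiv 3\pmod 4$, $p\ge 7$, $g=\frac{3p-5}{4}$. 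For such $p$ put $F(x)=x^{p+(p-1)/2}+x^{p-1}+x^{(p-3)/2}+1$. Then $\deg F=(3p-1)/2$ is even (because $(p-1)/2$ is odd) and $F$ is monic, so the smooth model of $y^2=F(x)$ is a hyperelliptic curve with two points at infinity and genus $\bigl((3p-1)/2-2\bigr)/2=\frac{3p-5}{4}=g$. It remains to verify that this curve is smooth and $\F_p$-maximal.

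For maximality I would evaluate $F$ on $\F_p$, using $x^p=x$ and $x^{(p-1)/2}=\chi_p(x)$ for $x\ne 0$: one finds $F(0)=1$ and, for $x\in\F_p^{*}$,
\[
F(x)=\chi_p(x)\,x+1+\chi_p(x)\,x^{-1}+1=2+\chi_p(x)\bigl(x+x^{-1}\bigr)=
\begin{cases}(x+1)^2/x,&\chi_p(x)=1,\\ -(x-1)^2/x,&\chi_p(x)=-1.\end{cases}
\]
Since $p\equiv 3\pmod 4$ gives $\chi_p(-1)=-1$, the degenerate values $x=-1$ and $x=1$ do not arise in the respective cases, so $F(x)$ is a nonzero square of $\F_p$ for every $x\in\F_p$; hence $y^2=F(x)$ has $2p$ affine $\F_p$-points and, with the two at infinity, exactly $2p+2$, i.e.\ it is $\F_p$-maximal. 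For smoothness I would show that $F$ is square-free over $\overline{\F_p}$. Writing $n=(p+1)/2$, one has $F(x)-1=x^{(p-3)/2}\bigl(x^{p+1}+x^{(p+1)/2}+1\bigr)$ and $F'(x)=x^{(p-5)/2}\bigl((3n-2)x^{p+1}+(2n-2)x^{(p+1)/2}+(n-2)\bigr)$. If $\alpha\in\overline{\F_p}$ were a multiple root, then $\alpha\ne 0$, and, putting $u=\alpha^{(p+1)/2}$ so that $\alpha^{p+1}=u^2$, the equations $F(\alpha)=0$ and $F'(\alpha)=0$ reduce --- via $2(3n-2)\equiv-1$, $2(2n-2)\equiv-2$, $2(n-2)\equiv-3\pmod p$ --- to $u(u^2+u+1)=-\alpha^2$ and $u^2+2u+3=0$. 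Eliminating $u^2$ yields $u(u^2+u+1)=3$, whence $\alpha^2=-3$ and therefore $u^2=\alpha^{p+1}=(-3)^{(p+1)/2}=3\chi_p(3)$ (here $(p+1)/2$ is even). Combined with $u^2=-2u-3$ this forces $2u=-3-3\chi_p(3)$: if $\chi_p(3)=-1$ then $u=0$, contradicting $\alpha\ne 0$; if $\chi_p(3)=1$ then $u=-3$, so $9=u^2=3$ and $p\mid 6$, contradicting $p\ge 7$. Thus $F$ has no multiple root, the curve is smooth, and together with the genus and maximality computations this finishes the proof.

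The step I expect to be the main obstacle is the smoothness verification: it depends on choosing the substitution $u=\alpha^{(p+1)/2}$ and on keeping careful track of the coefficients $3n-2$, $2n-2$, $n-2$ modulo $p$. The other point requiring care is the elementary bookkeeping that isolates the leftover configuration --- checking that the single curve $y^2=x^{p+(p-1)/2}+x^{p-1}+x^{(p-3)/2}+1$ is needed exactly for those $g$ not already covered by Propositions~\ref{prop:gcd_is_over_two}, \ref{prop:L_is_over_two} and \ref{prop:old_curve}.
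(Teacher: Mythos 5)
Your proposal is correct and follows essentially the same route as the paper: the same reduction via Propositions \ref{prop:gcd_is_over_two}, \ref{prop:L_is_over_two} and \ref{prop:old_curve} to the single leftover configuration $p\equiv 3\pmod 4$, $g=\frac{3p-5}{4}$, and the same treatment of $y^2=x^{p+(p-1)/2}+x^{p-1}+x^{(p-3)/2}+1$ (quadratic-character computation for $\F_p$-maximality, square-freeness of $F$ for smoothness). Your elimination via $u=\alpha^{(p+1)/2}$, $u^2+2u+3=0$ and $u^2=3\chi_p(3)$ is only a repackaging of the paper's computation, which solves the same quadratic as $u=-1\pm\sqrt{-2}$ and compares fourth powers to reach the same contradiction.
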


\begin{proof}
If $\D(p,g)>2$ or $\Ls(q,g)>1$, then Propositions $\ref{prop:gcd_is_over_two}$ and $\ref{prop:L_is_over_two}$ can be applied. Assume that $\D(p,g)=2$ and $\Ls(q,g)=1$. Then 
\begin{equation}\label{eq:g_bound}
\frac{p-1}{2}\leq g < p-2.
\end{equation}
Let $g$ be an integer such that $g=(p(2k+1)-5)/4$ for some $k\geq0$. It follows from the formula $\eqref{eq:g_bound}$ that $k=1$, i.e. $g=(3p-5)/4$. Note that $\deg{F(x)}=2g+2$.

Suppose that $p\equiv 5\mod{8}$; then the number $g$ is not integer. Thus Proposition~$\ref{prop:old_curve}$ can be applied. 

We are left with the case $p\equiv -1 \mod{4}$. We can assume that $p>3$. Consider a curve $y^2=F(x)$ over $\F_p$, where $F(x)=x^{p+(p-1)/2}+x^{p-1}+x^{(p-3)/2}+1$. Let us show that $F(x)$ has no multiple roots in $\overline{\F_p}$. We first note that the following identities hold$\colon$ 
\begin{eqnarray*}
F(x)+2x F^{\prime}(x)              & =\ r_1(x) & =\ -x^{p-1}-2x^{(p-3)/2}+1,\\ 
2F^{\prime}(x) -x^{(p-1)/2} r_1(x) & =\ r_2(x) & =\ -x^{(p-5)/2}(x^2+3).
\end{eqnarray*}
Now, suppose that $F(x)$ has a multiple root, say $\alpha$. Then we see that $\alpha^2 = -3$, since $r_2(\alpha)=0$ and $\alpha\not=0$. We also obtain
\[
-\alpha^2 r_1(\alpha)=\left(\alpha^{(p+1)/2}\right)^2+2\alpha^{(p+1)/2}+3=0.
\] 
This equation implies $\alpha^{(p+1)/2}=-1\pm\sqrt{-2}$. Furthermore, we can compute $\left( \alpha^{(p+1)/2} \right)^4$ in two different ways$\colon$
\begin{align*}
\left( \alpha^{(p+1)/2} \right)^4 & = (-1\pm\sqrt{-2})^4 = -7 \pm 4\sqrt{-2} \\
& = \left( \alpha^2 \right)^{p+1} = 9.
\end{align*}
Hence the equality $\sqrt{-2} = 4$ holds for the field $\F_p$, but this is not possible whenever $p>3$. Therefore, we have a contradiction and the polynomial $F(x)$ satisfies $\eqref{cnd:first}$. 

The polynomial $F(x)$ also satisfies $\eqref{cnd:second}$. Indeed, for any $x\in\F_p^*$ we have
\[
F(x)=\chi_p(x)x+2+\chi_p(x)x^{-1}=\chi_p(x)x^{-1}\left(1+\chi_p(x)x\right)^2.
\] 
Since $\chi_p(-1)=-1$, it follows that $\chi_p(\chi_p(x)x^{-1})=1$ for any $x\in\F_p^*$. Regardless of the value of $\chi_p(x)$, we obtain $\chi_p(F(x))=1$ for any $x\in\F_p$. 

Thus, the curve $y^2=F(x)$ is a non-singular $\F_p$-maximal genus $g$ hyperelliptic curve over $\F_p$.
\end{proof}

\begin{remark}
By the same method one can show that if $q$ is not a prime such that $q\equiv -1 \mod{4}$, then the curve $y^2=x^{q+(q-1)/2}+x^{q-1}+x^{(q-3)/2}+1$ is smooth and $\F_q$-maximal. However, this construction does not allow to obtain the desired bound on the genus.
\end{remark}

In order to get the bound $g \geq \max{\left\{\frac{q-1}{2}, 2\right\}}$ for odd $q$, we prove that if $q$ is neither a square nor a prime and $g$ is such that 
\begin{equation}\label{cnd:g_remained}
(q-1)/2\leq g<q-2,\ \gcd(2g+2,q-1)=2 \text{ and } 2g+2\equiv-1/2\mod{p},
\end{equation}
then there exists a non-singular genus $g$ pointless curve over $\F_q$. Now, we concentrate our attention only on this case. 

Consider the following family of hyperelliptic curves$\colon$
\begin{equation}\label{eq:family2}
y^2 = F_{n,b,\xi}(x)=x^{q-1+n} + b^2 x^{2n} - (2 b^2 \xi + 1) x^n + b^2 \xi^2.
\end{equation}
Let $b$, $\xi\in\F_p^*$, $\chi_p(\xi)=-1$, and let $n$ be an even number. Under these assumptions the polynomial $F_{n,b,\xi}(x)$ satisfies $\eqref{cnd:second}$. Indeed, we see that $\chi_q(F_{n,b,\xi}(x))=1$ for any $x\in\F_q$, since $F_{n,b,\xi}(x)=(x^{q-1}-1) x^n + b^2(x^n - \xi)^2$. Suppose that $\eqref{cnd:first}$ holds for $F_{n,b,\xi}(x)$. Then the hyperelliptic curve $\eqref{eq:family2}$ over $\F_p$ is smooth $\F_q$-maximal of genus 
\[
g=
\begin{cases}
(q-3+n)/2, & \text{$n$ is even and $0<n<q-1$,}\\
n-1, & n\geq q-1.
\end{cases}
\]

Recall that $g$ must satisfy $\eqref{cnd:g_remained}$, so we have the following restrictions on $n\colon$
\begin{equation}\label{cnd:n}
\text{$n$ even, $0<n<q-1$, $\gcd(n,q-1)=2$ and $n\equiv 1/2\mod{p}$.}
\end{equation}
Let us show that for such $n$ there are $b$ and $\xi$ in $\F_p$ such that $F_{n,b,\xi}(x)$ satisfies both $\eqref{cnd:first}$ and $\eqref{cnd:second}$. Our proof of this fact splits into two steps depending on $p$. 

\begin{lemma}\label{lm:small_p}
Suppose that $p$ is either $3$ or $5$. Let $n$ be such that $n\equiv1/2\mod{p}$. There are $b$, $\xi\in\F_p^*$, $\chi_p(\xi)=-1$, such that $F_{n,b,\xi}(x)$ has no multiple roots in $\overline{\F_p}$.
\end{lemma}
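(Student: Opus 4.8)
The plan is to show that the single choice $b=1$, $\xi=2$ works for both $p=3$ and $p=5$. Indeed $2$ is a quadratic non-residue modulo $3$ and modulo $5$, so $\chi_p(\xi)=-1$, and condition \eqref{cnd:second} is automatic from the factorization $F_{n,1,2}(x)=(x^{q-1}-1)x^{n}+(x^{n}-2)^{2}$ recorded above. Hence the whole task reduces to proving that $F:=F_{n,1,2}$ has no repeated root in $\overline{\F_p}$, i.e.\ that $\gcd(F,F')=1$. Throughout I would use the standing hypotheses on $n$ (so $n$ is even, $0<n<q-1$, $\gcd(n,q-1)=2$ and $n\equiv\tfrac12\pmod{p}$) together with the fact that in this situation $q$ is not a square, i.e.\ $q=p^{m}$ with $m$ odd.

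First I would suppose that $\alpha\in\overline{\F_p}$ is a common root of $F$ and $F'$ and extract two consequences, using $n\equiv\tfrac12\pmod{p}$. Since $F(0)=4\neq0$, we have $\alpha\neq0$. A short computation in $\F_p[x]$ gives
\[
F(x)+2xF'(x)=3x^{2n}-(4\xi+2)x^{n}+\xi^{2}=:r_{1}(x)
\]
and
\[
F'(x)=x^{n-1}\bigl((n-1)x^{q-1}+2nx^{n}-n(2\xi+1)\bigr),
\]
so, writing $t:=\alpha^{n}$, the first identity gives $r_{1}(t)=0$, while solving the second for $\alpha^{q-1}$ (legitimate since $n-1\equiv-\tfrac12\neq0$) gives $\alpha^{q-1}=2(t-\xi)-1=:s$. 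As $n$ is even, raising these to appropriate powers yields the key identity
\[
s^{n/2}=\bigl(\alpha^{q-1}\bigr)^{n/2}=\alpha^{n(q-1)/2}=\bigl(\alpha^{n}\bigr)^{(q-1)/2}=t^{(q-1)/2},
\]
and the goal is to contradict it.

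If $p=3$, then $r_{1}(x)=-x^{n}+1$, forcing $t=1$ and hence $s=2(1-2)-1=0$; but $\alpha^{q-1}=s=0$ is impossible for $\alpha\neq0$, so $F$ is separable. If $p=5$, then $r_{1}(x)=3x^{2n}+4$, so $t^{2}=2$; as $2$ is a non-square in $\F_5$, this forces $t\in\F_{25}\setminus\F_5$ and $s=2(t-2)-1=2t$, so the key identity becomes $2^{n/2}=t^{(q-1-n)/2}$. Since $m$ is odd we have $q\equiv5\pmod{8}$, so $q-1$ is exactly divisible by $4$; as $n$ is even with $\gcd(n,q-1)=2$, this forces $n\equiv2\pmod{4}$, whence $q-1-n\equiv2\pmod{4}$ and $(q-1-n)/2$ is odd. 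Therefore $t^{(q-1-n)/2}$ lies in $t\cdot\F_5^{*}\subseteq\F_{25}\setminus\F_5$, whereas $2^{n/2}\in\F_5$, a contradiction. Hence $F$ is separable in this case too, and the lemma follows.

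The two polynomial identities above, and the arithmetic facts $\chi_5(2)=-1$ and $q\equiv5\pmod{8}$ (valid because $m$ is odd), are routine to verify. I expect the main obstacle to be the \emph{a priori} uncontrolled dependence on $q$: the exponents $(q-1)/2$ and $n/2$ in the key identity are enormous, and the whole point of the argument is to funnel that identity --- via the reductions coming from $F(x)+2xF'(x)$ and from $F'(x)$ --- into a statement living inside the fixed small field $\F_5$ (or merely $\F_3$), where a $2$-divisibility count on $q-1$ and $n$, crucially exploiting that $q$ is not a square and that $\gcd(n,q-1)=2$, produces the required parity obstruction.
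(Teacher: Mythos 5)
Your $p=3$ argument is fine (it is essentially the paper's own computation, reorganized through $F+2xF'$ and $F'$), but the $p=5$ part proves a different statement than the one claimed. Lemma \ref{lm:small_p} assumes nothing about $n$ beyond $n\equiv 1/2\pmod{p}$, whereas your argument imports from \eqref{cnd:n} that $n$ is even and $\gcd(n,q-1)=2$: these are exactly what force $n\equiv 2\pmod{4}$ and $(q-1-n)/2$ odd, which is the heart of your parity obstruction. (Incidentally, the detour through ``$q$ is not a square, hence $q\equiv 5\pmod{8}$'' is not needed; $5^m\equiv 1\pmod{4}$ already gives $4\mid q-1$, which is all you use.) This restriction is not cosmetic: with only $n\equiv 1/2\pmod{5}$ your fixed choice $(b,\xi)=(1,2)$ genuinely fails. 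Take $q=5$, $n=108$ (so $n\equiv 3\equiv 1/2\pmod{5}$ and $n$ is even, but $\gcd(n,q-1)=4$). Then $F_{108,1,2}(x)=x^{216}+x^{112}-1$ over $\F_5$, and any $\alpha$ with $\alpha^{8}=3$ satisfies $F(\alpha)=3^{27}+3^{14}-1=2+4-1=0$ and $F'(\alpha)=\alpha^{111}\,(\alpha^{104}+2)=\alpha^{111}(3+2)=0$, so $F_{108,1,2}$ has repeated roots. So, as a proof of the lemma as stated, your proposal has a gap precisely where you discard the flexibility in the choice of $(b,\xi)$; the paper's proof needs only the congruence on $n$ because for $p=5$ it keeps two candidates, $F_{n,1,2}=x^{q-1+n}+x^{2n}-1$ and $F_{n,2,-2}=x^{q-1+n}-x^{2n}+1$, and uses the Greenfield--Drucker trinomial discriminant to show that at least one of them is square free for every admissible $n$.

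Within your restricted setting the computation itself is correct: $t=\alpha^{n}$ satisfies $t^{2}=2$, $\alpha^{q-1}=2t$, and since $(q-1-n)/2$ is odd, $t^{(q-1-n)/2}\in t\cdot\F_5^{*}\subseteq\F_{25}\setminus\F_5$ while $2^{n/2}\in\F_5$, a genuine contradiction. Moreover, those extra hypotheses do hold at the unique place the lemma is used (inside Proposition \ref{prop:new_curve}, where $n$ satisfies \eqref{cnd:n} and $q$ is not a square), so your argument would suffice for the paper if the lemma were restated with the hypotheses you actually use. To prove the lemma as it stands, either keep the second pair $(b,\xi)=(2,-2)$ as a fallback, as the paper does, or otherwise remove the dependence on ``$n$ even and $\gcd(n,q-1)=2$'' from your $p=5$ argument.
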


\begin{proof}
Case $p=3$. There is just one polynomial of the desired form over $\F_p\colon$
\[
F(x)=F_{n,\pm1,-1}(x)=x^{q-1+n} + x^{2n}  + x^n + 1.
\]

Let $\alpha\in\overline{\F_p}$ be a multiple root of $F(x)$. Since $n\equiv 2\mod{3}$, we have
\[
x F^{\prime}(x)= x^{q-1+n} + x^{2n}  - x^n.
\]
Note that $F(\alpha)-\alpha F^{\prime}(\alpha) = 2 \alpha^{n}+1=0,$ so $\alpha^n=1$. However,
\[
F(\alpha)=\alpha^{q-1+n} + \alpha^{2n}  + \alpha^n + 1=\alpha^{q-1}=0,
\]
so we get a contradiction. Thus $F(x)$ is square free.

Case $p=5$. One of the following polynomials is square free$\colon$
\[
F_{n,1,2}(x)=x^{q-1+n}+x^{2n}-1,\ F_{n,2,-2}(x)=x^{q-1+n}-x^{2n}+1.
\]
The Theorem $4$ in \cite{Greenfield84} (see Section $\ref{sec:preliminaries}$) implies that a trinomial $x^{q-1+n} \pm x^{2n} \mp 1$ is square free if and only if
\[
\Ss_{\pm} = (q-1+n)^N (\mp1)^{N-K} - (-1)^N (q-1-n)^{N-K} (2n)^K (\pm1)^N
\]
is non-zero, where $d=\gcd(q-1+n,2n)$, $N=(q-1+n)/d$ and $K=2n/d$. By the assumption on $n$ we have $n\equiv -2\mod{5}$ and
\[
\Ss_{+} = 2^N (-1)^{N-K} - (-1)^N = (-1)^{N-K}(2^N-(-1)^K),\ \Ss_{-} = 2^N - 1.
\]

Suppose $\Ss_-$ equals zero. Then, since $p=5$, we see that $4$ divides $N$. Note that $\gcd(N,K)=1$, thus $K$ is odd. Hence $\Ss_+\not=0$ and the lemma is proven.
\end{proof}

\begin{proposition}\label{prop:new_curve}
Let $q$ be not a square. Let $g$ be an integer satisfying $\eqref{cnd:g_remained}$. Then there exists a non-singular $\F_q$-maximal hyperelliptic curve over $\F_p$ of genus $g$ of the form $y^2 = F_{n,b,\xi}(x)$. 
\end{proposition}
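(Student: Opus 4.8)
The plan is to take $n=2g+3-q$; one checks directly that $\eqref{cnd:g_remained}$ forces $n$ to satisfy $\eqref{cnd:n}$, so that $y^2=F_{n,b,\xi}(x)$ has genus $g$. Since $q$ is not a square, $[\F_q:\F_p]$ is odd, so a non-square of $\F_p$ stays a non-square in $\F_q$; hence $\chi_q(\xi)=-1$ for every $\xi\in\F_p^*$ with $\chi_p(\xi)=-1$, and then $\eqref{cnd:second}$ holds for $F_{n,b,\xi}$ because $F_{n,b,\xi}(x)=(x^{q-1}-1)x^n+b^2(x^n-\xi)^2$. Thus the curve is smooth and $\F_q$-maximal as soon as $F_{n,b,\xi}$ has no repeated root, and the whole problem reduces to finding $b,\xi\in\F_p^*$, $\chi_p(\xi)=-1$, for which $\eqref{cnd:first}$ holds. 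For $p\in\{3,5\}$ this is Lemma $\ref{lm:small_p}$, so assume $p\geq 7$.

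Argue by contradiction: suppose $F_{n,b,\xi}$ has a repeated root $\alpha\in\overline{\F_p}$ for every admissible $(b,\xi)$. Then $\alpha\neq 0$ (as $F_{n,b,\xi}(0)=b^2\xi^2\neq 0$), and since $F_{n,b,\xi}(\beta)=b^2(\beta^n-\xi)^2\neq 0$ for $\beta\in\F_q^*$ (as $\xi$ is not an $n$-th power in $\F_q$, that requiring $\chi_q(\xi)=1$), we get $\alpha\notin\F_q$, so $\alpha^{q-1}\neq 1$. As in the proof of Theorem $\ref{thm:prime_field}$ I would kill the top-degree term: since $n\equiv 1/2\not\equiv 1\pmod p$, the polynomial $r_1:=F_{n,b,\xi}-\tfrac{1}{n-1}\,xF_{n,b,\xi}^{\prime}$ is defined, and a short computation gives $(n-1)r_1(x)=-b^2(n+1)x^{2n}+(1+2b^2\xi)x^n+(n-1)b^2\xi^2$, a quadratic in $x^n$. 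Hence $w:=\alpha^n$ is a root of
\[
b^2(n+1)\,w^2-(1+2b^2\xi)\,w-b^2(n-1)\,\xi^2=0,
\]
and $F_{n,b,\xi}(\alpha)=0$ forces $\alpha^{q-1}=1-b^2(w-\xi)^2/w=:\rho\neq 1$.

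The key structural point is that the displayed quadratic has coefficients in $\F_p$, so $w\in\F_{p^2}$, whence $\rho\in\F_{p^2}$; and since $\gcd(n,q-1)=2$, i.e. $\gcd(n/2,(q-1)/2)=1$, choosing $s,t\in\Z$ with $s\tfrac{n}{2}+t\tfrac{q-1}{2}=1$ gives $\alpha^2=w^s\rho^t\in\F_{p^2}$, so $\alpha\in\F_{p^4}$. Writing $q=p^m$ with $m$ odd, the relation $\alpha^{p^4}=\alpha$ collapses the exponent: $\alpha^{q-1}=\alpha^{p^{\,m\bmod 4}-1}$, equal to $\alpha^{p-1}$ or $\alpha^{p^3-1}$. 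Now split into two cases. If the quadratic splits over $\F_p$, then $w\in\F_p$, hence $\alpha\in\F_{p^2}$ and $\rho=\alpha^{q-1}=\alpha^{p-1}\in\mu_{p+1}\cap\F_p^*=\{\pm1\}$, so $\rho=-1$; eliminating $b^2$ between $\rho=-1$ (that is, $b^2(w-\xi)^2=2w$) and the quadratic leaves $(2n+1)w^2-2\xi w-(2n-1)\xi^2=0$, whose roots are $w=\xi$ (impossible, as $w=\xi$ does not solve the quadratic) and $w=\xi(1-2n)/(2n+1)$; but $2n\equiv1\pmod p$ makes the latter zero, contradicting $w=\alpha^n\neq 0$. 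So the quadratic is irreducible over $\F_p$, and $w,\bar w=w^p$ are conjugate in $\F_{p^2}$. In this remaining case $\rho^n=(\alpha^{q-1})^n=(\alpha^n)^{q-1}=w^{q-1}=w^{p-1}=\bar w/w$, so the $\F_{p^2}/\F_p$-norm $N$ satisfies $N(\rho)^n=1$; feeding this together with $\alpha^{q-1}\in\{\alpha^{p-1},\alpha^{p^3-1}\}$ and the quadratic back into the relations pins $(b^2,\xi)$ to an explicit locus that cannot contain all $\tfrac{(p-1)^2}{4}$ admissible pairs once $p$ exceeds a small explicit bound, the finitely many remaining primes being treated by hand in the spirit of Lemma $\ref{lm:small_p}$. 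The main obstacle I expect is precisely this last case — converting the compatibility of $\alpha^n=w$ with $\alpha^{q-1}=\rho$ into a constraint on $(b^2,\xi)$ of degree bounded independently of $q$, and then clearing the small primes without a heavy case analysis; this is where the three hypotheses $\gcd(2g+2,q-1)=2$, $2g+2\equiv-1/2\pmod p$ and ``$q$ is not a square'' all have to be used together.
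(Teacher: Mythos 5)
Your setup is sound and matches the paper's: the choice $n=2g+3-q$, the verification of $\eqref{cnd:n}$, the descent of $\chi_p(\xi)=-1$ to $\chi_q(\xi)=-1$ using that $[\F_q:\F_p]$ is odd, and the reduction of everything to $\eqref{cnd:first}$ are all correct, as is the reduction (via $r_1$, which is the paper's $r(x)$ up to a scalar) to the quadratic in $w=\alpha^n$ with $\F_p$-coefficients. Your ``split'' case is also essentially complete and is a valid variant of the paper's final contradiction: from $w\in\F_p$ and $\gcd(n,q-1)=2$ you get $\alpha^2\in\F_p^*$, hence $\alpha^{q-1}=\pm1$, and eliminating $b^2$ kills the case $\rho=-1$ (the case $\rho=1$ being excluded since $F_{n,b,\xi}$ has no roots in $\F_q$).

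The genuine gap is exactly the one you flag yourself: the case where the discriminant $b^4\xi^2+4b^2\xi+1$ of the quadratic is a non-square in $\F_p$ for the pair $(b,\xi)$ under consideration. Your plan there (norm condition $N(\rho)^n=1$, then a counting argument showing the ``bad locus'' cannot swallow all $\tfrac{(p-1)^2}{4}$ admissible pairs for large $p$, plus hand treatment of small primes) is not carried out and is not obviously within reach; as stated it proves nothing. The paper's key move, which you are missing, is that this case never needs to be analyzed: since only one good pair $(b,\xi)$ is required, one simply \emph{chooses} $b$ so that the split case occurs, i.e.\ so that $b^4\xi^2+4b^2\xi+1$ is a nonzero square in $\F_p$. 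This is possible because $s^2=b^4\xi^2+4b^2\xi+1$ defines an elliptic curve over $\F_p$ (here $p>5$), which by Hasse--Weil has at least $p+1-\left\lfloor 2\sqrt{p}\right\rfloor$ rational points --- more than the few points with $bs=0$ or at infinity once $p>11$ --- while for $p=7,11$ one takes $\xi=-1$, $b=\pm 1/2$, $s=\pm 1/4$ explicitly. Replacing your Case 2 by this existence argument (and then running your Case 1 computation for that chosen $b$) closes the proof; without it, the proposal is incomplete.
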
 

\begin{proof}
Take an integer $n$ satisfying $\eqref{cnd:n}$ such that $q-1+n=2g+2$. Take $b$, $\xi\in\F_p^*$, $\chi_p(\xi)=-1$. As we showed above, the polynomial $F_{n,b,\xi}(x)$ satisfies $\eqref{cnd:second}$. Our goal is to prove that $b$ can be chosen in a manner that $F_{n,b,\xi}(x)$ is a square free polynomial, i.e. satisfies $\eqref{cnd:first}$. According to Lemma $\ref{lm:small_p}$, we can assume that $p>5$.

Let $\alpha\in\overline{\F_q}$ be a multiple root of $F_{n,b,\xi}(x)=x^{q-1+n} + b^2 x^{2n} - (2 b^2 \xi + 1) x^n + b^2 \xi^2$. Then $\alpha$ is a root of the following polynomials (recall that $n\equiv 1/2\mod{p}$)$\colon$
\begin{align*}
-2 x F_{n,b,\xi}^{\prime}(x) & = x^{q - 1 + n} - 2 b^2 x^{2n} + (2 b^2 \xi + 1) x^n,\\
r(x) & = F_{n,b,\xi}(x) + 2 x F_{n,b,\xi}^{\prime}(x) = 3 b^2 x^{2n} - 2 (2 b^2 \xi + 1) x^n + b^2 \xi^2. 
\end{align*}
By evident computations we have
\begin{align}
\alpha^n & = \frac{2 b^2 \xi + 1 \pm \sqrt{b^4 \xi^2 + 4 b^2 \xi + 1} }{3 b^2}, \label{eq:power_n} \\
\alpha^{q-1} & = 2 b^2 \alpha^{n} - (2 b^2 \xi + 1) = -\frac{1}{3}(2 b^2 \xi + 1) \pm \frac{2}{3} \sqrt{b^4 \xi^2 + 4 b^2 \xi + 1}. \label{eq:power_q-1}
\end{align}

Let us show that $b$ can be chosen inside $\F_p^*$ in the way that 
\[
\chi_p(b^4 \xi^2 + 4 b^2 \xi + 1)=1.
\]
We need to show that there is such $s$ in $\F_p^*$ that 
\begin{equation}\label{eq:ell_curve}
s^2=b^4 \xi^2 + 4 b^2 \xi + 1.
\end{equation}
Since $p>5$, the equation  defines an elliptic curve over $\F_p$. It has two points at infinity, two points with $b=0$, and no more than two points with $s=0$. According to Hasse--Weil bound, the curve $\eqref{eq:ell_curve}$ has at least $p + 1 - \left \lfloor 2\sqrt{p} \right\rfloor$ rational points. Thus, when $p>11$, this curve has at least $7$ points. In the cases $p=7$ or $11$ one can let $\xi=-1$. Then the curve $\eqref{eq:ell_curve}$ has four points with $b = \pm 1/2$ and $s=\pm1/4$. Hence, given $\xi\in\F_p^*$ such that $\chi_p(\xi)=-1$, there is an admissible value for $b$ in $\F_p^*$.

Let $(b,s)$ be a rational point of the curve $\eqref{eq:ell_curve}$ such that $bs \not = 0$. Put $t=b^2 \xi$. Let us remark that 
\begin{equation}\label{eq:powers}
\alpha^n = \frac{(2t+1) \pm s}{3b^2},\ \alpha^{q-1} = \frac{-(2t+1) \pm 2s}{3}.
\end{equation}
are elements of $\F_p$. It easily follows that $\alpha^{\gcd(n,q-1)} = \alpha^{\gcd(2g+2,q-1)}=\alpha^2$ is in $\F_p^*$, and $\alpha^{q-1} = (\alpha^2)^{(q-1)/2}=\pm1$. Taking into account $\eqref{eq:ell_curve}$ and $\eqref{eq:powers}$ we obtain 
\[
4 s^2 = ((2t+1)+3\alpha^{q-1})^2 = \left((2t+1)\pm3\right)^2 = 4 t^2 + 4 t + 10 \pm (12t+6).
\]
By the assumption $(b,s)$ is a solution of $\eqref{eq:ell_curve}$, so that 
\[
4 s^2 = 4 t^2 + 4 t + 10 \pm (12t+6) = 4 t^2 + 16 t + 4.
\]
It follows that either $12\equiv0\mod{p}$ or $-24t\equiv0\mod{p}$. Thus we get a contradiction, since $p>5$ and $b\in\F_p^*$.
\end{proof} 

The last ingredient in the case of odd $q$ is the following result which refines \cite[Lemma $2.1$]{Becker13}.

\begin{proposition}\label{prop:last_genus}
Let $q>5$. There is an $\F_q$-maximal non-singular curve over $\F_q$ of genus $\frac{q-3}{2}$ of the form
\[
y^2 = x^{q-1} + 2 \frac{\alpha-\beta}{\alpha+\beta} x^{\frac{q-1}{2}} + 1,
\]
where $\alpha$, $\beta\in\F_q^*$ such that $\chi_q(\alpha)=\chi_q(\beta)=\chi_q(\alpha+\beta)=1$.
\end{proposition}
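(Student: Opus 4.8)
The plan is to show that the polynomial $F(x):=x^{q-1}+2\frac{\alpha-\beta}{\alpha+\beta}x^{(q-1)/2}+1$ satisfies conditions $\eqref{cnd:first}$ and $\eqref{cnd:second}$ from Section $\ref{sec:preliminaries}$ for an appropriate choice of $\alpha,\beta\in\F_q^*$ with $\chi_q(\alpha)=\chi_q(\beta)=\chi_q(\alpha+\beta)=1$. Since $F$ is monic of degree $q-1=2g+2$ with $g=(q-3)/2>1$ (here we use $q>5$), these two conditions will guarantee, exactly as explained in the preliminaries, that $y^2=F(x)$ is a smooth hyperelliptic curve of genus $g$ over $\F_q$ which is $\F_q$-maximal.

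For condition $\eqref{cnd:second}$ I would start from the identity
\[
(\alpha+\beta)\,F(x)=\alpha\bigl(x^{(q-1)/2}+1\bigr)^{2}+\beta\bigl(x^{(q-1)/2}-1\bigr)^{2},
\]
which follows by expanding the right-hand side. For $x\in\F_q^*$ one has $x^{(q-1)/2}=\chi_q(x)\in\{\pm1\}$, so the right-hand side equals $4\alpha$ if $\chi_q(x)=1$ and $4\beta$ if $\chi_q(x)=-1$; together with $F(0)=1$ this shows that $F(x)\in\{4\alpha/(\alpha+\beta),\,4\beta/(\alpha+\beta),\,1\}$ for every $x\in\F_q$. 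As $\chi_q(4)=1$ and $\chi_q(\alpha)=\chi_q(\beta)=\chi_q(\alpha+\beta)=1$, we conclude $\chi_q(F(x))=1$ for all $x\in\F_q$, which is $\eqref{cnd:second}$; in particular $F$ has no zero in $\F_q$, so the affine model has exactly $2q$ rational points, and since $F$ is monic there are two further points at infinity, so the curve has $2q+2$ points over $\F_q$, i.e.\ it is $\F_q$-maximal.

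For condition $\eqref{cnd:first}$ I would view $F$ as a trinomial $x^{n}+ax^{k}+b$ with $n=q-1$, $k=(q-1)/2$, $b=1$ and $a=2\frac{\alpha-\beta}{\alpha+\beta}$, so that $d=\gcd(n,k)=(q-1)/2$, $N=2$, $K=1$, and apply the discriminant formula recorded after Lemma $\ref{lmm:discriminant}$. A direct computation collapses the relevant bracket to
\[
n^{N}b^{N-K}-(-1)^{N}(n-k)^{N-K}k^{K}a^{N}=(q-1)^{2}\Bigl(1-\tfrac{(\alpha-\beta)^{2}}{(\alpha+\beta)^{2}}\Bigr)=(q-1)^{2}\,\frac{4\alpha\beta}{(\alpha+\beta)^{2}},
\]
which is nonzero because $p\nmid q-1$ and $\alpha,\beta\in\F_q^*$; hence the discriminant of $F$ does not vanish and $F$ is square free. (Alternatively, write $F(x)=(x^{(q-1)/2}-r_{1})(x^{(q-1)/2}-r_{2})$ with $r_{1}r_{2}=1$ and $r_{1}\neq r_{2}$: each factor is separable because $(q-1)/2\not\equiv0\bmod p$ and its only critical point $x=0$ is not a root, while a common root of the two factors would force $r_{1}=r_{2}$.) It remains to secure the three residue conditions on $\alpha,\beta$: rescaling by $\beta^{-1}$, a square, reduces to finding $\alpha$ with $\chi_q(\alpha)=\chi_q(\alpha+1)=1$, and the standard evaluation $\sum_{x\in\F_q}\chi_q\bigl(x(x+1)\bigr)=-1$ gives $\#\{\alpha:\chi_q(\alpha)=\chi_q(\alpha+1)=1\}\geq(q-5)/4$, which is positive for $q\geq9$; the case $q=7$ is settled by $\alpha=\beta=1$. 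All of this is elementary, and I do not expect a real obstacle; the only subtlety is to notice that the three quadratic-residue hypotheses on $\alpha,\beta$ are precisely what makes $\chi_q(F(x))\equiv1$, whereas the non-vanishing of $\alpha\beta$ alone forces square-freeness, so $\eqref{cnd:first}$ and $\eqref{cnd:second}$ can be met together.
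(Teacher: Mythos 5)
Your proposal is correct and follows essentially the same route as the paper: the identity $(\alpha+\beta)F(x)=\alpha\bigl(x^{(q-1)/2}+1\bigr)^2+\beta\bigl(x^{(q-1)/2}-1\bigr)^2$ yields $\eqref{cnd:second}$ and $\F_q$-maximality, and square-freeness together with the existence of admissible $\alpha,\beta$ is verified just as required. The only minor differences are that the paper checks $\eqref{cnd:first}$ directly (a repeated root forces $\frac{\alpha-\beta}{\alpha+\beta}=\pm1$, impossible since $\alpha\beta\neq0$) rather than via the trinomial discriminant, and it obtains $\alpha,\beta$ by counting points on the conic $x^2+y^2=\gamma$ instead of your character-sum estimate.
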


\begin{proof}
It is easy to see that if $q>5$, then there are three non-zero quadratic residues $\alpha$, $\beta$, $\gamma$ in $\F_q$ such that $\gamma = \alpha + \beta$. Indeed, let $\gamma=\delta^2$, $\delta \in \F_q^*$. It is well known that the curve $x^2+y^2=\gamma$ has $q+1 \geq 8$ rational points over $\F_q$. In particular, it has two points at infinity, and four affine points $(\pm\delta,0)$ and $(0,\pm\delta)$. Therefore, the curve has at least two affine points $(a,b)$ such that $ab \not= 0$. Thus, there are $\alpha$, $\beta \in \left(\F_q^*\right)^2$ such that $\alpha + \beta = \gamma$.

Let $F(x) = \frac{\alpha}{\gamma} (x^{\frac{q-1}{2}} + 1)^2  + \frac{\beta}{\gamma} (x^{\frac{q-1}{2}} - 1)^2$. It is easy to see that the conditions $\eqref{cnd:first}$ and $\eqref{cnd:second}$ hold for this polynomial. Indeed, we have 
\[
F(x) = \left(x^{\frac{q-1}{2}}\right)^2 + 2 \frac{\alpha-\beta}{\gamma} x^{\frac{q-1}{2}} + 1.
\]
It has a repeated root if and only if $\frac{\alpha-\beta}{\alpha+\beta} = \pm1$. Therefore, $F(x)$ satisfies $\eqref{cnd:first}$, since $\alpha$ and $\beta$ are both non-zero. For any $a\in\F_q^*$ we see that $F(a)$ is either $4 \alpha / \gamma$ or $4 \beta / \gamma$, which are both non-zero quadratic residues. Therefore, $\chi_q(F(a))=1$ for any $a\in\F_q$, so that $\eqref{cnd:second}$ holds.

As a consequence, if $q>5$, then a hyperelliptic curve $y^2 = F(x)$ over $\F_q$ is a smooth $\F_q$-maximal curve of genus $\frac{q-3}{2}$. Hence the result.
\end{proof}

\begin{remark}
Using the same arguments, one can prove that if $p>5$ and $q=p^n$, then there are $\alpha$, $\beta$, $\gamma$ in $\F_p^*$ such that $\gamma=\alpha+\beta$ and $\chi_p(\alpha)=\chi_p(\beta)=\chi_p(\gamma)=1$. Define $F(x)$ as in the last proposition. Then the curve $y^2 = F(x)$ is defined over $\F_p$, has no singular points and is $\F_q$-maximal. This construction gives an $\F_q$-maximal curve defined over the prime field $\F_p$, however, it requires the restriction $p>5$ on the characteristic of the field $\F_q$.
\end{remark}

Now, if $q$ is odd, then Propositions $\ref{prop:gcd_is_over_two}$, $\ref{prop:L_is_over_two}$, $\ref{prop:old_curve}$, $\ref{prop:new_curve}$ and $\ref{prop:last_genus}$ imply that for any $g \geq g_q$ (with the constant $g_q$ defined in the statement of Theorem $\ref{thm:linear_bound}$) there exists a non-singular $\F_q$-maximal genus $g$ hyperelliptic curve over $\F_q$. In order to finish the proof of Theorem $\ref{thm:linear_bound}$ in the case of odd characteristic, one applies Lemma $\ref{lm:maximal_pointless}$.

\section{Proof of Theorem $\ref{thm:linear_bound}$, the case of even characteristic}
\label{sec:the_proof_even}

Finally, it remains to prove the result in the case when $q$ is even. The existence of pointless curves over $\F_2$ is already known. It follows from the part $(i)$ of Proposition~$4.1$ in \cite{Stichtenoth11} that for every $g \geq 2$ there is a pointless smooth genus $g$ curve over $\F_2$. In other cases, when $q$ is even and $q>2$, we have the following result.

\begin{proposition}\label{prop:phc_char2}
Let $q=2^n$, $q>2$. For every $g \geq q-1$ there is a pointless non-singular genus $g$ curve over $\F_q$ of the form
\[
y^2 + a (x^{g+1} + x^{g+1-(q-1)} + c) y = b (x^{2g+2}+x^{2g+2-2(q-1)}+d)
\]
for some $a$, $b$, $c$, $d\in\F_q^*$.
\end{proposition}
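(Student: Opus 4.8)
The plan is to mimic the odd-characteristic strategy: exhibit an explicit curve of the prescribed shape, verify that it has genus exactly $g$ and is non-singular, and then verify directly that it has no $\F_q$-rational points — there is no "maximal/twist" shortcut available in characteristic $2$, since Lemma~\ref{lm:maximal_pointless} used the quadratic twist, so pointlessness must be checked by hand. Concretely, I would set $m=g+1-(q-1)\geq 0$ (using $g\geq q-1$), so that the curve is
\[
\Cs\colon\quad y^2 + a\,(x^{g+1}+x^{m}+c)\,y = b\,(x^{2g+2}+x^{2m}+d),
\]
with $Q(x)=a(x^{g+1}+x^m+c)$ of degree $g+1$ and $P(x)=b(x^{2g+2}+x^{2m}+d)$ of degree $2g+2$, so that $\max\{2\deg Q,\deg P\}=2g+2$ and the genus is $g$ provided $\Cs$ is smooth. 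Note that $Q(x)^2$ and $P(x)$ are designed so that $Q(x)^2 = a^2(x^{2g+2}+x^{2m}+c^2)$ matches $P(x)/b$ up to the constant term and the coefficient $b$; this near-coincidence is what will force pointlessness.

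\textbf{Pointlessness.} For $x\in\F_q$ one has $x^{q-1}\in\{0,1\}$, hence $x^{g+1}=x^{q-1}x^{m}\in\{0,x^{m}\}$ and $x^{2g+2}=x^{2(q-1)}x^{2m}\in\{0,x^{2m}\}$; moreover $x^{g+1}=0\iff x=0\iff x^{2g+2}=0$ (since $g+1>q-1\geq 1$ forces $m\ge 1$, so actually one must be slightly careful when $m=0$, i.e. $g=q-2$, which is excluded as $g\ge q-1$, so indeed $m\ge 1$ and $x^{m}=0\iff x=0$). Therefore for every $x\in\F_q$ we get $Q(x)=a\,(x^m+x^m+c)=ac$ when $x\ne 0$ and $Q(0)=ac$, so in fact $Q(x)=ac$ for \emph{all} $x\in\F_q$; similarly $P(x)=b(x^{2m}+x^{2m}+d)=bd$ for all $x\in\F_q$. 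So on the affine chart the equation becomes $y^2+ac\,y=bd$ independently of $x$, and this has no solution $y\in\F_q$ precisely when $\Tr_{\F_q/\F_2}(bd/(ac)^2)=1$; one chooses $a,b,c,d\in\F_q^*$ to arrange this (e.g. fix $a=c=1$ and pick $b,d$ with $\Tr_{\F_q/\F_2}(bd)=1$, which is possible since $q>2$). The two points at infinity satisfy the second affine model with $x=0$: there $\tilde Q(0)$ and $\tilde P(0)$ are the leading-type coefficients $a$ and $b$ respectively, giving $y^2+ay=b$, and one further requires $\Tr_{\F_q/\F_2}(b/a^2)=1$; this is a second linear-trace condition on the chosen constants, simultaneously satisfiable. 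Hence $\Cs$ has no $\F_q$-point.

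\textbf{Smoothness.} By the criterion recalled in Section~\ref{sec:preliminaries}, $\Cs$ is smooth iff $\gcd\!\big(Q,\,Q'^2P+P'^2\big)=1$ (and a symmetric condition at infinity). Here $Q'(x)=a\big((g+1)x^g+mx^{m-1}\big)$ and $P'(x)=b\big((2g+2)x^{2g+1}+(2m)x^{2m-1}\big)=0$ in characteristic $2$ (every exponent in $P$ is even and the derivative kills them), so the condition simplifies dramatically to $\gcd(Q,\,Q'^2P)=1$, i.e. $\gcd(Q,Q')=1$ and $\gcd(Q,P)=1$. Now $\gcd(Q,P)$: modulo $Q$ we have $Q^2\equiv 0$, and $b^{-1}P = x^{2g+2}+x^{2m}+d$, while $a^{-2}Q^2 = x^{2g+2}+x^{2m}+c^2$, so $b^{-1}P - a^{-2}Q^2 = d - c^2 = d+c^2$ is a nonzero constant (choose $d\ne c^2$), giving $\gcd(Q,P)=1$ directly. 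For $\gcd(Q,Q')$: $Q/a = x^m(x^{q-1}+1)+c$; a common root $\alpha$ of $Q$ and $Q'$ would have $\alpha\ne 0$ (as $Q(0)=ac\ne0$) and would satisfy both $x^m(x^{q-1}+1)=c/a\cdot a = $ (the appropriate constant) and the derivative relation; the derivative $Q'/a = (g+1)x^g+mx^{m-1} = x^{m-1}\big((g+1)x^{q-1}+m\big)$, so since $\alpha\ne 0$ either $(g+1)\equiv 0$ and $m\equiv 0$ in $\F_2$ simultaneously (then $Q'\equiv 0$, forcing a further coprimality check) or $\alpha^{q-1}=m/(g+1)\in\F_2$. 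One analyses the two parities of $(g+1,m)=(g+1,g+1+(q-1))=(g+1,g)$ — they have opposite parity, so exactly one of $g+1,m$ is odd — and in each subcase one shows, using $d+c^2\ne 0$ and one more genericity constraint on $c$ (or on $a,b,d$), that the system is inconsistent over $\overline{\F_q}$. The symmetric computation at infinity (replace $Q(x)$ by $x^{g+1}Q(1/x)$, etc.) is identical after the substitution $x\mapsto 1/x$ and the constant-term condition there is again governed by $c,d$.

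\textbf{Main obstacle.} I expect the genuine work to be the smoothness verification of $\gcd(Q,Q')=1$, i.e. ruling out a repeated root of $Q$, because $Q$ is a genuine trinomial-type polynomial (not a perfect square like $P+$const is forced to compare against) and in the degenerate parity case where $Q'\equiv 0$ one must instead argue that $Q$ itself is separable — but $Q'\equiv 0$ in characteristic $2$ means $Q$ is a square, which would be fatal, so the real content is to check that the parities of $g+1$ and $m$ never both make $Q$ a perfect square; since $g+1$ and $m=g+1-(q-1)$ differ by $q-1$ which is odd, they have opposite parity, so $Q$ has at least one odd-degree term and is not a square, and then $\gcd(Q,Q')=1$ reduces to the explicit non-vanishing of a resultant, which one forces by choosing $c$ (equivalently, the constant term of $Q/a$) outside a finite bad set — possible since $q>2$ gives room, and these finitely many constraints on $a,b,c,d$ can all be met simultaneously alongside the two trace conditions above.
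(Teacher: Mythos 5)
Your proposal follows the paper's route almost exactly: the same two trace conditions via Hilbert 90 (one for the affine points, where $f(x)=c$ and $h(x)=d$ for every $x\in\F_q$, one for the two points at infinity), and the same smoothness reduction using $P'=0$ in characteristic $2$ together with $b^{-1}P=(a^{-1}Q)^2+c^2+d$, so that $\gcd(Q,P)=1$ once $d\neq c^2$; the paper then finishes by choosing $b=a^2\alpha$, $d=c^2\beta/\alpha$ with $\Tr_{\F_q/\F_2}\alpha=\Tr_{\F_q/\F_2}\beta=1$, $\alpha\neq\beta$, which meets all your constraints at once.

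The one place where you diverge is the step you label the main obstacle, $\gcd(Q,Q')=1$, and there your argument as written is not a proof: you defer to ``an explicit non-vanishing of a resultant, which one forces by choosing $c$ outside a finite bad set --- possible since $q>2$ gives room,'' but you give no bound on that bad set, so ``$q>2$'' establishes nothing (a priori the bad set could exhaust $\F_q^*$). In fact no genericity in $c$ is needed, and you already have the key observation in hand: since $q-1$ is odd, $g+1$ and $m=g+1-(q-1)$ have opposite parity, so in characteristic $2$ exactly one of the two terms of $Q'(x)=a\bigl((g+1)x^{g}+mx^{m-1}\bigr)$ survives and $Q'$ is a nonzero monomial; as $Q(0)=ac\neq0$, no root of $Q$ is a root of $Q'$, hence $\gcd(Q,Q')=1$ unconditionally. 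This is precisely how the paper disposes of this point (``$f'(x)$ is a monomial, which is obviously prime to $f(x)$''), and with that replacement your proof is complete; the case analysis you sketch (``either $g+1\equiv m\equiv 0$ in $\F_2$, or $\alpha^{q-1}=m/(g+1)$'') also collapses immediately for the same parity reason, since the first alternative is impossible and the second forces $\alpha=0$, contradicting $Q(0)=ac\neq 0$.
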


\begin{proof}
Let $\Cs$ be the projective curve defined by the affine equation
\[
y^2 + a f(x) y = b h(x),
\] 
where $a$, $b\in\F_q^*$ and 
\begin{align*}
f(x) &= x^{g+1} + x^{g+1-(q-1)} + c, \\
h(x) &= x^{2g+2} + x^{2g+2-2(q-1)} + d.
\end{align*} 

Recall that a curve $y^2 + Q(x)y=P(x)$ over a finite field of characteristic $2$ is non-singular (as a projective curve) if only and only if the polynomials $Q(x)$ and $R(x)= Q^{\prime}(x)^2 P(x) + P^{\prime}(x)^2$ are coprime (see Section $\ref{sec:preliminaries}$).

Assume that $c$, $d$ in $\F_q^*$ are such that $c^2\not=d$. Then it is easy to see that the curve $\Cs$ is smooth. Indeed, we have $R(x)=a^2 b f^{\prime}(x)^2 h(x) + b^2 h^{\prime}(x)^2$. Note that $h^{\prime}(x)=0$ and $f^{\prime}(x)$ is a monomial, which is obviously prime to $f(x)$. Thus we have $\gcd(R(x), a f(x)) = \gcd(h(x), f(x))$. Since $h(x) = f(x)^2 + c^2 + d$, where $c^2+d\not=0$ (by the assumption), we see that $f(x)$ and $h(x)$ are coprime. Hence $\gcd(R(x), a f(x)) = 1$, so that $\Cs$ is a smooth curve of genus $g$ over $\F_q$. 

Remark that the choice of $a$ and $b$ does not affect the smoothness of the curve $\Cs$. We can therefore use this fact to obtain a pointless curve.

As it was explained in Section $\ref{sec:preliminaries}$, the curve $\Cs$ is the union of two affine curves
\[
y^2 + a f(x) y = b h(x) \text{ and } y^2 + a x^{g+1} f(1/x) = b x^{2g+2} h(1/x).
\]
The set of rational points of the curve $\Cs$ is the disjoint union of the set of rational points of the first affine curve and the set of rational points of the second one having $x=0$. Hence any rational point of $\Cs$ corresponds to a solution of either $y^2 + a c y + b d = 0$ or $y^2 + a y + b = 0$ over $\F_q$. These equations are, obviously, equivalent to $y^2 + y + bd (ac)^{-2} = 0$ and $y^2 + y + b a^{-2} = 0$ respectively. Recall that by the Hilbert'90 theorem the equation $y^2 + y + \alpha = 0$ has no solutions in $\F_q$ if and only if $\Tr_{\F_q/\F_2}{\alpha}=1$. Thus, the curve $\Cs$ has no rational points if and only if
\begin{equation}\label{trace_condition}
\Tr_{\F_q/\F_2}{\left(\frac{b}{a^2} \cdot \frac{d}{c^2}\right)}=1 \text{ and } \Tr_{\F_q/\F_2}{\left(\frac{b}{a^2}\right)}=1.
\end{equation}

Since $q>2$, there are two distinct elements in $\F_q$, say $\alpha$ and $\beta$, such that $\Tr_{\F_q/\F_2}{\alpha}=\Tr_{\F_q/\F_2}{\beta}=1$. Let $b = a^2 \alpha$ and $d = c^2 \frac{\beta}{\alpha}$. It is clear that $c^2 \not= d$ and the conditions $\eqref{trace_condition}$ hold. Thus $\Cs$ is a smooth curve of genus $g$ having no rational points.
\end{proof}

Now, the goal of the paper is achieved and Theorem $\ref{thm:linear_bound}$ is proven.

\begin{remark}
As we have already written, H.~Stichtenoth in \cite[Section 4]{Stichtenoth11} considered the case $q=2$. He showed that for any $g \geq 2$ a genus $g$ curve over $\F_2$
\[
y^2 + y = \frac{x^2+x}{f(x)} + 1,
\]
where $f(x)\in\F_2[x]$ is an irreducible polynomial of degree $g+1$, has no rational points. By slightly modifying this construction, N.~Anbar reproved this result in \cite[Section 5]{Anbar13}. Motivated by this, we note that a lot of examples of pointless curves can be easily obtained by means of the Artin--Schreier curves for arbitrary $q$. More precisely, their constructions can be generalized almost verbatim as follows.

Let $u(x)$ and $v(x)$ be monic polynomials of degree $n+1$ having no roots in $\F_q$. Suppose that $v(x)$ is irreducible. Then the Artin--Schreier curve 
\begin{equation}\label{eq:Art_Sch_curve}
y^q-y=\frac{u(x)}{v(x)}
\end{equation}
is a non-singular curve over $\F_q$. By \cite[Lemma 2.1]{Anbar13} it is has genus $(q-1)n$. This curve is pointless, since $\frac{u(x)}{v(x)}\not=0$ and $\frac{x^{n+1} u(1/x)}{x^{n+1} v(1/x)}\not=0$ for any $x\in\F_q$.

This shows that for any prime power $q$ there is a smooth pointless curve over $\F_q$ of arbitrary large genus $g$ divisible by $q-1$. This result is weaker than Theorem $\ref{thm:linear_bound}$ unless $q=2$ (since in that case an~Artin--Schereier curve is hyperelliptic). It seems hard to obtain better results using the Artin--Schreier coverings of the projective line.
\end{remark}

\end{document}